\tikzset{%
  highlight1/.style={rectangle,rounded corners,color=red!,fill=blue!15,draw,fill opacity=0.5,thick,inner sep=0pt}
}
\tikzset{%
  highlight2/.style={rectangle,rounded corners,color=red!,draw,fill opacity=0.5,thick,inner sep=0pt}
}
\tikzset{%
  highlight3/.style={rectangle,rounded corners,color=red!,draw,fill opacity=0.5,thick,inner sep=0pt}
}
\tikzset{
    cheating dash/.code args={on #1 off #2}{
        \csname tikz@addoption\endcsname{%
            \pgfgetpath\currentpath%
            \pgfprocessround{\currentpath}{\currentpath}%
            \csname pgf@decorate@parsesoftpath\endcsname{\currentpath}{\currentpath}%
            \pgfmathparse{\csname pgf@decorate@totalpathlength\endcsname-#1}\let\rest=\pgfmathresult%
            \pgfmathparse{#1+#2}\let\onoff=\pgfmathresult%
            \pgfmathparse{max(floor(\rest/\onoff), 1)}\let\nfullonoff=\pgfmathresult%
            \pgfmathparse{max((\rest-\onoff*\nfullonoff)/\nfullonoff+#2, #2)}\let\offexpand=\pgfmathresult%
            \pgfsetdash{{#1}{\offexpand}}{0pt}}%
    }
}
\begin{document}

\catchline{}{}{}{}{} 

\markboth{V.~N.~Belykh {\it et al.}}{Twisted homoclinic orbits in Lorenz and Chen systems}

\title{Twisted homoclinic orbits in Lorenz and Chen systems:\\ rigorous proofs from universal normal form}

\author{Vladimir~N.~Belykh, Nikita~V.~Barabash \\ and  Anastasia~E.~Suroegina}
\address{Volga State University of Water Transport,\\
	5, Nesterov Str., 603950, Nizhny Novgorod, Russia \\
        Lobachevsky State University of Nizhny Novgorod, \\
        23, Gagarin Ave., 603022, Nizhny Novgorod, Russia \\
	belykh@unn.ru\\ barabash@itmm.unn.ru\\ suroegina@unn.ru}

\maketitle

\begin{history}
\received{(to be inserted by publisher)}
\end{history}

\begin{abstract}
The properties common to the Lorenz and Chen attractors, as well as their fundamental differences, have been studied for many years in a vast number of works and remain a topic far from a rigorous and complete description.
In this paper we take a step towards solving this problem by carrying out a rigorous study of the so-called universal normal form to which we have reduced the systems of both of these families.
For this normal form, we prove the existence of infinite set of homoclinic orbits with different topological structure defined by the number of rotations around axis of symmetry. 
We show that these rotational topological features are inherited by the attractors of Chen-type systems and give rise to their twisted nature -- the generic difference from attractors of Lorenz type.
\end{abstract}

\keywords{dynamical system, bifurcation, attractor, homoclinic orbit, chaos}

 \begin{multicols}{2}
\section{Introduction
\label{Sec:Introduction}}
 In 1978 there appeared the following 3D system 
 \begin{equation}
     \begin{array}{l}
          \dot{x}=y,\\
          \dot{y}=-\left(x^2+z-1\right)x-\lambda y,\\
          \dot{z}=-\alpha z+\beta x^2,
     \end{array}
     \label{eq: main system}
 \end{equation}
which was obtained from the famous Lorenz system by a one-to-one variables change \cite{Yudovich1978,shimizu1978chaos}.
Here the positive parameters $\alpha,\beta,\lambda$ are obtained  from the Lorenz parameters using the losing orientation mapping so that only a part of these parameters corresponds to the positive parameters of the Lorenz system. 
This was the reason that the authors of \cite{guckenheimer2013nonlinear} called the homoclinic orbits existence theorem proved in \cite{belykh1980qualitative,belykh1984bifurcation} as ``\textit{continuation of homoclinic bifurcations}''. 
\footnote{Some historical details on this theorem proof one can read in \cite{lozi2024paths}.} 

In the paper \cite{shil1993normal} a detailed study of the bifurcations of the system \eqref{eq: main system} was carried out, where several systems with symmetry were reduced to it.
The existence of orientable strange attractor of the system~\eqref{eq: main system} for small $\lambda,\beta$ and $|\alpha-1|$ was proved in the paper \cite{ovsyannikov2016analytic}.
The starting point of the proof was codimension 2 bifurcation corresponding to the existence of two simple symmetric homoclinic orbits to a neutral saddle at the origin.
Below we show that the Chen system \cite{chen1999yet} as well as a set of Lorenz-like systems are the particular cases of the system~\eqref{eq: main system}.

The above allows us to call the system~\eqref{eq: main system} \textit{the universal normal form} (UNF) of generalized Lorenz system.

In this paper, the use of the auxiliary systems method allowed us to significantly expand the analyzed parameter range and demonstrate the possibility of forming more complex orientable and non-orientable attractors.
A key role in their formation belongs to homoclinic orbits of the saddle, twisted around the axis of symmetry.
We call such orbits \textit{twisted homoclinic orbits}.
Such twists can be associated with a third symbol, complementing the well-known Bernoulli scheme of two symbols for classical Lorenz attractors \cite{afraimovich1977origin,afraimovich1982attractive}.

We prove the existence of infinite set of twisted homoclinic orbits having different number of rotations around an invariant axis of symmetry.
We reduce the Lorenz and Chen systems to UNF~\eqref{eq: main system} and show that they belong to two non-overlapping domains of parameters having the common boundary.
We call these two domains Lorenz-like and Chen-like systems and analyze their attractors. 
Finally we present the computer simulation and show that in a small neighborhood of the parameters of the original Chen attractor there are twisted homoclinic orbits that play a major role in its formation, unlike the Lorenz attractor.

\section{Lorenz-like and Chen-like systems \label{Sec:Generalized Lorenz system}}
\noindent The generalized Lorenz system of the form 
\begin{equation}
    \begin{array}{l}
        \dot{x}=-a(x-y),\\
         \dot{y}=(r-z)x-qy,\\
         \dot{z}=xy-bz,
    \end{array}
    \label{eq: generalized Lorenz}
\end{equation}
was proposed in the papers \cite{vcelikovsky2002generalized,leonov2015differences} to unite the famous Lorenz, Chen, Lu and Tigan systems. 
The parameters of these systems are as follows: $a>0$, $b>0$ and 
\begin{equation}
    \begin{array}{lll}
\textrm{Lorenz system:} &\quad  r>0, &q=1,\\ 
\textrm{Chen system:}  &\quad  r=c-a, &q=-c,\, c>0,\\  
\textrm{Lu system:}  &\quad r=0, &q=-c,\, c>0,\\
\textrm{Tigan system:}  &\quad  r=c-a, &q=0.
    \end{array}
    \label{eq:type_of_system}
\end{equation}

\begin{remark}
The canonical form in the paper \cite{wang2013gallery} [see system (6) therein] is the generalized Lorenz system~\eqref{eq: generalized Lorenz} in the case
\begin{equation*}
    \begin{pmatrix}
    a_{11} & a_{12} \\
    a_{21} & a_{22}
    \end{pmatrix}
    =
    \begin{pmatrix}
    -a & a \\
    r & -q
    \end{pmatrix},
    \quad a_{33}=-b,\quad c = 0. 
\end{equation*}
\end{remark}

\begin{proposition}
The one-to-one map 
\begin{equation*}
\begin{array}{c}
      V:\quad (x,y,z,t)  \to \\
     \left[\dfrac{\omega }{\sqrt{2}}x, \ \dfrac{\omega^2a}{\sqrt{2}}(y-x), \ \omega^2\left(az-\dfrac{x^2}{2}\right),\ \dfrac{t}{\omega}\right]
\end{array}
\end{equation*}
where 
\begin{equation*}
 \omega^{-2}=a(r-q)>0,
\end{equation*}
 transforms the generalized Lorenz system \eqref{eq: generalized Lorenz} into UNF \eqref{eq: main system}.
 The parameters mapping for $q=const$
 \begin{equation*}
     P:\quad(a,b,c)\to (\lambda,\alpha,\beta)
 \end{equation*}
 reads
 \begin{equation}
         \lambda=(q+a)\omega,\quad \alpha=b\omega,\quad \beta=(2a-b)\omega,
         \label{eq:lambda_alpha_beta}
 \end{equation}
 where $r>q>-a$.
\label{prop:(1)The one-to-one map}
\end{proposition}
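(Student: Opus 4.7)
The plan is to substitute the change of coordinates $V$ directly into \eqref{eq: main system} and read off the conditions on $(\lambda,\alpha,\beta)$ under which the pull-back coincides with \eqref{eq: generalized Lorenz}. The key algebraic observation that makes the reduction feasible is that the quadratic piece in $Z$ is chosen precisely so that
\[
X^2 + Z \;=\; \tfrac{\omega^2}{2}x^2 + \omega^2 a z - \tfrac{\omega^2}{2}x^2 \;=\; \omega^2 a z,
\]
so the cubic nonlinearity $(X^2+Z-1)X$ of the UNF pulls back to $(\omega^2 az - 1)X$, which is \emph{linear} in $z$. Without this cancellation, one could not hope to match the right-hand side of the generalized Lorenz system.

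I would handle the three equations in turn. Since $T=t/\omega$, the chain rule gives $d/dT = \omega\, d/dt$. For the first coordinate, $dX/dT = (\omega^2/\sqrt{2})\,\dot x$, and substituting $\dot x = -a(x-y)$ yields $(\omega^2 a/\sqrt{2})(y-x)$, which is exactly $Y$; this step is an identity, with no constraint on parameters.

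For the second coordinate, $dY/dT = (\omega^3 a/\sqrt{2})(\dot y-\dot x)$. I would substitute the first two equations of \eqref{eq: generalized Lorenz} and expand $-(X^2+Z-1)X - \lambda Y$ back into the original variables using the identity above, then match coefficients of $x$, $y$ and $zx$. The $zx$ terms agree automatically; the $y$-coefficient gives $\lambda = (q+a)\omega$; and the $x$-coefficient gives $\omega^2 a(r-q)=1$, which is exactly the defining relation $\omega^{-2}=a(r-q)$.

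For the third coordinate, $dZ/dT = \omega^3(a\dot z - x\dot x)$. Substituting $\dot z = xy-bz$ and $\dot x=-a(x-y)$, the $xy$ contributions cancel and the expression reduces to $\omega^3 a(x^2 - bz)$. On the other side, $-\alpha Z + \beta X^2 = -\alpha\omega^2 a z + \tfrac{(\alpha+\beta)\omega^2}{2}\,x^2$, so the $z$-coefficient gives $\alpha = b\omega$ and the $x^2$-coefficient gives $\alpha+\beta = 2a\omega$, hence $\beta = (2a-b)\omega$. The inequalities $r > q > -a$ ensure $\omega^{-2}>0$ and $\lambda>0$, completing the identification. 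The entire argument is a bookkeeping computation, and I expect no genuine obstacle; the only nontrivial design choice is the quadratic shear in the definition of $Z$, which is precisely what produces the cancellation highlighted at the start.
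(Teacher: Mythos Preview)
Your argument is correct and is essentially the same direct-verification approach as the paper's proof: the paper substitutes the inverse map $V^{-1}$ into the generalized Lorenz system, while you push the Lorenz variables forward through $V$ and check that the UNF equations are satisfied, which amounts to the same computation read in the opposite direction. Your explicit identification of the cancellation $X^2+Z=\omega^2 az$ and the coefficient matching for $\lambda,\alpha,\beta$ reproduces exactly the content of the parameter map~\eqref{eq:lambda_alpha_beta}.
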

\begin{proof}
    A direct substitution of the inverse map $V^{-1}$ coordinates and time
    \begin{equation*}
    \begin{array}{cc}
        V^{-1}:\quad(x,y,z,t)\to \\
        \left[\dfrac{\sqrt{2}}{\omega}x,\dfrac{\sqrt{2}}{\omega}\left(x+\dfrac{1}{a\omega}y\right),\dfrac{1}{a\omega^2}\left(z+x^2\right), \omega t\right]
    \end{array}
    \end{equation*}
    into the generalized Lorenz system~\eqref{eq: generalized Lorenz} proves the proposition. 
\end{proof}

 The Jacobian of the map $P$ has the form $JP=a\omega^5 q$. Hence, the map $P$ is singular and changes the orientation at $q=0$.
 From~\eqref{eq:lambda_alpha_beta} we obtain the system of equations 
 \begin{equation*}
    \lambda^2a(r-q)=(q+a)^2, \quad
      A^2(r-q)=a,
 \end{equation*}
 where
 \begin{equation}
     A=\dfrac{\alpha+\beta}{2}
 \label{eq:A}
 \end{equation}
is the characteristic parameter such that the map $P$ changes orientation for $\lambda=A$ at $q=0$.
Dividing the first equation by the second we get  the final form of equations characterizing the map $P$
 \begin{equation}
     \begin{array}{l}
         \lambda=A\left(1+\dfrac{q}{a}\right),\quad A^2=\dfrac{a}{r-q}.
     \end{array}
     \label{eq: inverse map P}
 \end{equation}

Using ~\eqref{eq:type_of_system}, \eqref{eq: inverse map P} we present the main statement concerning the relation between UNF~\eqref{eq: main system} and a set of Lorenz-like and Chen-like systems.

\begin{proposition}
\begin{enumerate}
    \item The parameter region of UNF~\eqref{eq: main system}
    \begin{equation}
        \lambda>A
        \label{eq: proposition 2.1}
    \end{equation}
    corresponds to the Lorenz-like systems for $q>0$ (white region in Fig.~\ref{fig:UNF_systems_diag}).
    \item The parameter region of UNF~\eqref{eq: main system}
    \begin{equation}
        \lambda<A
        \label{eq: proposition 2.2}
    \end{equation}
    corresponds to Chen-like systems (gray-blue region in Fig.~\ref{fig:UNF_systems_diag}), such that for  
    \begin{equation}
        \lambda=\frac{A^2-1}{2A}<A
        \label{eq:Chen_like_sys}
    \end{equation} 
    UNF~\eqref{eq: main system} corresponds to the original Chen system.
    \item The parameter region of UNF~\eqref{eq: main system}
    \begin{equation}
        \lambda=\frac{A^2-1}{A}<A,
        \label{eq:Lu}
    \end{equation} 
    corresponds to Lu system.
    \item The dividing line
    \begin{equation}
        \lambda=A
        \label{eq:Tigan}    
    \end{equation} 
    corresponds to Tigan system.
\end{enumerate}
\label{prop:(2)five conditions}
\end{proposition}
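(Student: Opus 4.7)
The plan is to verify each of the four statements by direct substitution of the parameter specifications from \eqref{eq:type_of_system} into the two identities \eqref{eq: inverse map P} obtained in Proposition~\ref{prop:(1)The one-to-one map}, namely $\lambda=A(1+q/a)$ and $A^2=a/(r-q)$. Since $a>0$ is assumed throughout and $A>0$ by \eqref{eq:A}, the sign of $\lambda-A$ is governed entirely by the sign of $q/a$, i.e.\ by the sign of $q$. This immediately gives items (1) ($q=1>0\Rightarrow\lambda>A$) and (4) ($q=0\Rightarrow\lambda=A$), and shows that every Chen-like or Lu-like case ($q=-c<0$) satisfies $\lambda<A$, establishing the inclusion part of item (2).

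For item (2), I would substitute $q=-c$, $r=c-a$ into \eqref{eq: inverse map P}, obtaining $A^2=a/(2c-a)$ and $\lambda=A(a-c)/a$. The identity to verify is $\lambda=(A^2-1)/(2A)$. The key algebraic step is rewriting $A^2-1=2(a-c)/(2c-a)$ using the first relation, and then using $A(2c-a)=a/A$ (again from $A^2=a/(2c-a)$) to collapse $(A^2-1)/(2A)$ into $A(a-c)/a$, which matches $\lambda$. The strict inequality $\lambda<A$ is automatic from $c>0$.

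For item (3), I would substitute $q=-c$, $r=0$, giving $A^2=a/c$ and $\lambda=A(a-c)/a$ as before. To check $\lambda=(A^2-1)/A$, I would expand $(A^2-1)/A=(a-c)/(cA)$ and use $A^2=a/c\Leftrightarrow 1/(cA)=A/a$ to rewrite this as $A(a-c)/a$, matching $\lambda$. Again $\lambda<A$ follows from $c>0$.

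The whole proof is a short bookkeeping exercise, and none of the cases present a real obstacle; the only mildly delicate step is making sure the algebraic simplification in items (2) and (3) uses both defining relations of \eqref{eq: inverse map P} simultaneously (rather than each one in isolation), so that the quantities $A^2-1$ and $(A^2-1)/A$ can be expressed back in terms of $a,c$ in a form directly comparable with $\lambda=A(a-c)/a$. I would finish by noting that the map $P$ being singular precisely at $q=0$, already observed after Proposition~\ref{prop:(1)The one-to-one map} via $JP=a\omega^5 q$, is consistent with the Tigan line \eqref{eq:Tigan} forming the common boundary between the Lorenz-like and Chen-like regions depicted in Fig.~\ref{fig:UNF_systems_diag}.
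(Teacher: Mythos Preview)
Your proposal is correct and follows essentially the same route as the paper: both arguments substitute the parameter specifications \eqref{eq:type_of_system} into the relations \eqref{eq: inverse map P}, read off the sign of $\lambda-A$ from the sign of $q$, and for the Chen and Lu cases eliminate $c/a$ between the two identities to obtain \eqref{eq:Chen_like_sys} and \eqref{eq:Lu}. Your write-up is slightly more explicit about the intermediate algebra, but the logic and the key steps coincide with the paper's proof.
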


\begin{proof}
From \eqref{eq:type_of_system} and \eqref{eq: inverse map P} it follows that for original Lorenz system \eqref{eq: generalized Lorenz}, $q=1$, the parameter $\lambda$ satisfies inequality~\eqref{eq: proposition 2.1}.
The generalized Lorenz system \eqref{eq: generalized Lorenz} satisfies the same condition~\eqref{eq: proposition 2.1} for any $q>0$ only.

UNF~\eqref{eq: main system} is no longer the Lorenz-like system in the region \eqref{eq: proposition 2.2}, $q<0$.

For original Chen system~\eqref{eq: generalized Lorenz}, $r=c-a$, $q=-c<0$, the equation \eqref{eq: inverse map P} takes the from
    \begin{equation*}
        \lambda=\left(1-\frac{c}{a}\right)A, \quad 2\frac{c}{a}-1=\frac{1}{A^2},
    \end{equation*}
    which gives the expression~\eqref{eq:Chen_like_sys}.
    
Similarly for Lu system~\eqref{eq: generalized Lorenz}, $r=0$, $q=-c<0$, instead of~\eqref{eq: inverse map P} we obtain 
    \begin{equation*}
         \lambda=\left(1-\frac{c}{a}\right)A, \ \ \ \frac{c}{a}=\frac{1}{A^2}       
    \end{equation*}
    which gives formula \eqref{eq:Lu}.

Due to~\eqref{eq: inverse map P} similarly to Chen system the positive parameter $\lambda$ satisfies the condition~\eqref{eq: proposition 2.2} for all negative values $q\in(-a, 0)$.
Thus any UNF~\eqref{eq: main system} is Chen-like system in the region \eqref{eq: proposition 2.2}.

The line~\eqref{eq:Tigan} corresponds to Tigan system~\eqref{eq: generalized Lorenz}, $q=0$. 
At this line the map $P$ becomes degenerate and therefore serves the boundary of Lorenz-like and Chen-like regions.
\end{proof}

\begin{figure*}
    \centering   \includegraphics[width=0.4\linewidth]{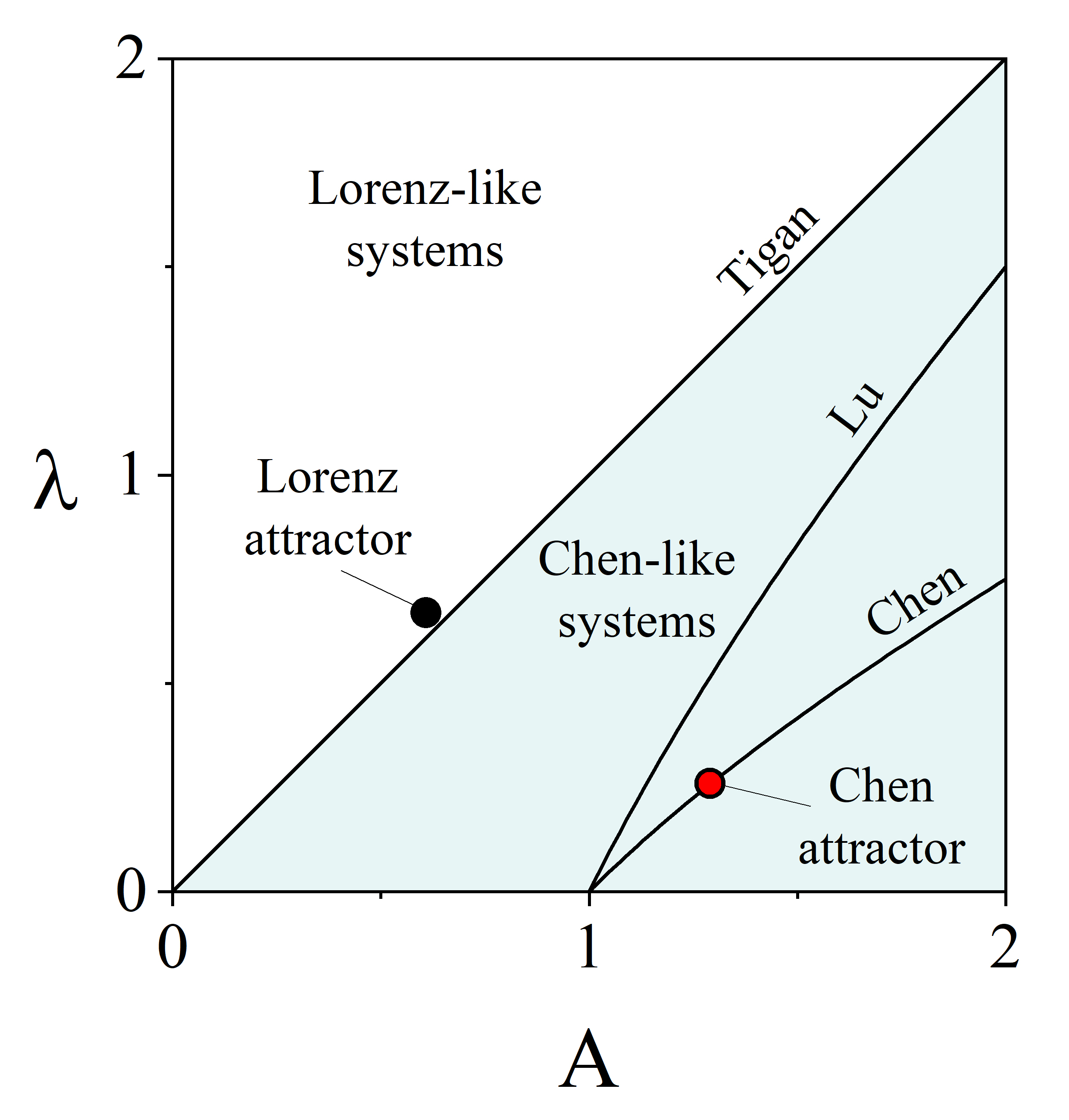}
    \smallskip
    \caption{Partition of the parameter plane of UNF~\eqref{eq: main system} into regions of Lorenz- (white region) and Chen-like (gray-blue region) systems. Parameter $A$ is defined by Eq.~\eqref{eq:A}.
    ``Lu'', ``Chen'' and ``Tigan'' lines are defined by Eq.~\eqref{eq:Lu}--\eqref{eq:Tigan}, respectively.
    The black circle with coordinates $(A=0.608,\,\lambda=0.669)$ corresponds to the original Lorenz attractor [see phase portrait in Fig.~\ref{fig:UNF_bif_diag_Lorenz}(e)]. The red circle with coordinates $(A=1.29,\,\lambda=0.26)$ corresponds to the original Chen attractor [see phase portrait in Fig.~\ref{fig:UNF_bif_diag_Chen}(c)].}
    \label{fig:UNF_systems_diag}
    \vspace*{12pt}
\end{figure*}

\begin{remark} The generalized system~\eqref{eq: generalized Lorenz} is not in normal form because one of its four parameters is redundant.
\end{remark}

\section{Nonlocal analysis of the saddle manifolds}


UNF~\eqref{eq: main system} has three equilibria $O(0,0,0)$ and $E^{\pm}\left(\pm\sqrt{\frac{\alpha}{\alpha+\beta}},\,0,\,\frac{\beta}{\alpha+\beta}\right)$.
   Two symmetrical equilibria $E^{\pm}$ are stable in the region of parameters 
   \begin{equation}
   \begin{array}{cc}
     \lambda>\lambda_{s}(\alpha,\beta), \\
     \\
    \lambda_{s}(\alpha,\beta)\triangleq \frac{1}{2(\alpha+\beta)}\left(\sqrt{m^2 + 8\beta(\alpha+\beta)}-m\right),
   \end{array}
    \label{eq: two stable equilibria} 
   \end{equation}  
where $m=2+\alpha\beta + \alpha^2$.
At $\lambda=\lambda_{s}(\alpha,\beta)$ Andronov-Hopf bifurcation occurs and for  $\lambda<\lambda_{s}(\alpha,\beta)$ the equilibria $E^{\pm}$ become the saddle-foci having 2D unstable manifolds which simultaneously surves the stable manifolds of two saddle limit cycles appeared after the bifurcation.

Characteristic exponents of the equilibrium $O$ are 
\begin{equation}
      e_{1,2}=-\dfrac{\lambda}{2}\pm\sqrt{\dfrac{\lambda^2}{4}+1}, \quad
      e_3=-\alpha<0.
\label{eq: eigenvalues}
\end{equation}
Therefore the equilibrium $O$ is the saddle having 1D unstable $W^u$ and 2D stable $W^s$ manifolds. 
The Shilnikov's condition $|e_3|<e_1$ takes the form \cite{chua2001methods}
\begin{equation}
    \sigma\triangleq \frac{1-\alpha^2}{\alpha}-\lambda>0.
    \label{eq: Shilnikov's condition}
\end{equation}
Since $|e_3|<e_1<|e_2|$ under this condition eigenvector $v_3(0,0,1)$ corresponding to $e_3$ determines the leading direction.

\subsection{1D invariant manifold $W^u$
    \label{Sec:1D invariant manifold $W^u$}}
\noindent Consider the Poincar\'{e} cross-section $S=S^-\cup S^+$, where 
\begin{equation*}
\begin{array}{c}
    S^{-}=\{ x<0,\,y=0,\,z>0\},\\
    S^{+}=\{ x>0,\,y=0,\,z>0\}.
\end{array}
\end{equation*}
\begin{lemma}
      Manifold $W^u$ passes through the region 
\begin{equation*}
g^u=\left\{ 0\leq y< \sqrt{x^2-\frac{x^4}{4}},\;\frac{\beta}{\alpha+2}x^2<z<\frac{\beta}{\alpha}x^2 \right\} 
\end{equation*}
and intersects the cross-section $S^{+}$ at the point
\begin{equation}
        p^{u}(x^u,z^u)=W^u\cap S^{+},
\label{eq:intersects the cross-section in a point}
\end{equation}
such that 
\begin{equation*}
0<x^u<\sqrt{2},\quad 0<z^u<\dfrac{2\beta}{\alpha}.
\end{equation*}
\label{lem:(1)intersection of manifolds}
\end{lemma}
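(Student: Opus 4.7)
The plan is to enclose $W^u$ inside $g^u$ by combining a Lyapunov-type function that controls the $(x,y)$-projection with two auxiliary quantities monitoring the ratio $z/x^2$, and then to read off the numerical bounds at the first return to $\{y=0\}$.

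First I would analyse the linearisation at $O$. The unstable direction lies in $\{z=0\}$ along $(1,e_1,0)$, and a standard power-series construction of the invariant manifold gives $y = e_1 x + O(x^2)$, $z = \tfrac{\beta}{\alpha+2e_1}x^2 + O(x^3)$ on the branch with $x>0$. Since $\lambda>0$ forces $0<e_1<1$, the leading $z$-coefficient lies strictly between $\tfrac{\beta}{\alpha+2}$ and $\tfrac{\beta}{\alpha}$, while $y/x \to e_1<1$, so $W^u$ enters the interior of $g^u$ immediately. I would then set up the trapping apparatus. Taking $V(x,y)=y^2/2+x^4/4-x^2/2$ and differentiating along the flow gives $\dot V = -xyz-\lambda y^2 < 0$ whenever $x,y>0$ and $z\geq 0$, so $V$ decreases strictly from $V(O)=0$ and yields $y^2 < x^2-x^4/2 < x^2-x^4/4$, hence both the $y$-bound of $g^u$ and the useful estimate $y<x$. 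For the $z$-envelope I would introduce $G_1 = z-\tfrac{\beta}{\alpha}x^2$ and $G_2 = z-\tfrac{\beta}{\alpha+2}x^2$; short computations on their zero sets give $\dot G_1\big|_{G_1=0} = -\tfrac{2\beta}{\alpha}xy<0$ and $\dot G_2\big|_{G_2=0} = \tfrac{2\beta}{\alpha+2}x(x-y)>0$, the latter using the just-established $y<x$. Combined with the initial placement from the local analysis, these three sign conditions confine $W^u$ to $g^u$ until it first meets $\{y=0\}$.

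Finally I would derive intersection with $S^+$ and the quantitative bounds. Compactness of $\overline{g^u}$ plus strict monotonicity of $V$ on $\{y>0\}$ force the branch of $W^u$ to hit $\{y=0\}$ at a first time $T>0$, giving $p^u := W^u(T)\in S^+$ with $x^u,z^u>0$. Evaluating $V(p^u)<0$ at $y=0$ yields $(x^u)^2\bigl((x^u)^2-2\bigr)<0$, hence $0<x^u<\sqrt 2$; strict preservation of $G_1<0$ up to time $T$ then gives $0<z^u<\tfrac{\beta}{\alpha}(x^u)^2 < \tfrac{2\beta}{\alpha}$. The step I expect to be most delicate is precisely the transversal first crossing with $\{y=0\}$: one must exclude asymptotic accumulation on the unique equilibrium $E^+\in\{y=0\}\cap\overline{g^u}$ without ever reaching the cross-section. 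In the saddle-focus regime $\lambda<\lambda_s$ the 1D branch $W^u$ cannot lie on the 1D stable manifold of $E^+$ (codimension 1 in parameters), and a direct analysis of $\dot y = -(x^2+z-1)x-\lambda y$ on approach shows that $y$ changes sign at a definite first crossing, completing the argument.
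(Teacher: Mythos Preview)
Your proof is correct and follows essentially the same strategy as the paper: the same Lyapunov function $V=\tfrac{x^4}{4}-\tfrac{x^2}{2}+\tfrac{y^2}{2}$, the same local expansion $y=e_1x$, $z=\tfrac{\beta}{\alpha+2e_1}x^2$ placing $W^u$ inside $g^u$, and the same trapping by the parabolic surfaces $z=\tfrac{\beta}{\alpha}x^2$ and $z=\tfrac{\beta}{\alpha+2}x^2$. The paper in fact defers the computation on these two surfaces to an external reference (Lemma~3 in \cite{belykh1984bifurcation}); you supply that computation explicitly, including the auxiliary inequality $y<x$ needed for the lower barrier, which is a genuine addition. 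Your formula $\dot V=-xyz-\lambda y^2$ is the correct one (the paper's $-yz-\lambda y^2$ is a misprint). Finally, the paper disposes of the first crossing with $\{y=0\}$ in one line (``since $\dot x=y>0$''), while you are right to flag the exclusion of asymptotic accumulation on $E^{+}$ as the delicate point; your treatment here is more careful than the paper's, though neither version is fully airtight for all parameter regimes.
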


\begin{proof}
   The Lyapunov directing function of the form 
    \begin{equation}
        V=\frac{x^4}{4}-\frac{x^2}{2}+\frac{y^2}{2}
        \label{eq: Lyapunov function}
    \end{equation} has the derivative with respect to UNF~\eqref{eq: main system} 
    \begin{equation}
       \dot{V}=-yz-\lambda y^2<0, \ \ yz>0. 
       \label{eq: Lyapunov function derivative}
    \end{equation}
    Then the trajectories of UNF~\eqref{eq: main system}  enter the region $V<0$ through its boundary for $yz>0$. Besides, the trajectories of UNF~\eqref{eq: main system}   enter the region $g^u$ through the boundaries $z=\frac{\beta x^2}{\alpha+2}$ and $z=\frac{\beta x^2}{\alpha}$ (see Lemma 3 in \cite{belykh1984bifurcation}). In a small neighborhood of the origin, the equation of $W^u$ can be written as follows 
    \begin{equation}
        y=e_1x, \ \ z=\frac{\beta}{\alpha+2e_1}x^2.
        \label{eq: small neighborhood}
    \end{equation}
    Hence $W^u$ enters $g^u$ and since $\dot{x}=y>0$ the manifold $W^u$ intersects $S^{+}$ at the point $p^{u}=W^u\cap S^{+}\in g^u |_{y=0}$.
\end{proof}

\subsection{2D invariant manifold $W^s$
\label{2D invariant manifold $W^s$}}
\noindent Here we present improved version of the nonlocal analysis of $W^s$ given in \cite{belykh1984bifurcation}.
First we remind that the line 
\begin{equation*}
I=\Big\{ x=y=0,\ z\geq 0\Big\}
\end{equation*}
is invariant and lies in $W^s,\, I\subset W^s$.
Due to involution $\left(x,y,z\right) \to \left(-x,-y,z\right)$ of UNF~\eqref{eq: main system}, it is sufficient to study $W^s$ in the region $x\geq0$. Moreover, since $\dot{z}>0$ for $z<0$ all trajectories leave the region $z<0$.
Then we consider $W^s$ in the region $\left(x\geq 0,\ z\geq 0\right)$.

\subsubsection{Tangent stable manifold $W^s_t$
\label{subsec:Tangent stable manifold $W^s_t$}}

Introduce small $\varepsilon$-neighborhood of $I$
\begin{equation*}
    U_\varepsilon(I)=
    \left\{x^2+y^2<\varepsilon^2,\,z\in\mathbb{R}^1\right\}
\end{equation*}
for which UNF~\eqref{eq: main system} has the form
\begin{equation}
\begin{array}{l}
    \dot{x}=y,\\
    \dot{y}=-(z-1)x-\lambda y,\\
    \dot{z}=-\alpha z.
\end{array}
    \label{eq: main system in small neighborhood}
\end{equation}

The last equation in~\eqref{eq: main system in small neighborhood} has the solution $z(t)=z_0e^{-\alpha t}$. 
Then we can consider the system~\eqref{eq: main system in small neighborhood} as non-autonomous 2D system with  additional parameter $z_0$.
This system has globally stable manifold $\{z=0\}$ and linear system at it gives the eigenvector $v_2(1,e_2,0)$ defining the line $y=e_2 x$ tangent to the stable manifold $W^s$.
The invariant line $I$ of the system~\eqref{eq: main system in small neighborhood} has stable invariant manifold $W^{s}_{t}$ which is tangent to $W^s$.

\begin{proposition}
    The surface in the phase space $(x,y,t)$ of the form
\begin{equation}
    W_{\eta}=\left\{x,y,t \ \Big| \ y=-\left(\frac{\lambda}{2}+\eta(t)\right)x, \ t\in \mathbb{R}^1\right\}
    \label{eq: surface in the phase space}
\end{equation}
is an integral manifold of the system~\eqref{eq: main system in small neighborhood} if the function $\eta(t)$ is a solution of Riccati equation
\begin{equation}
\begin{array}{l}
     \dot{\eta}=a(t)+\eta^2, \\
     a(t)= z_0e^{-\alpha t}-1-\dfrac{\lambda^2}{4}.
\end{array}
    \label{eq: Riccati equation}
\end{equation}
\label{prop:(3) integral manifold }
\end{proposition}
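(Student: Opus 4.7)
The plan is to verify directly that $W_\eta$ is invariant by computing $\dot y$ in two ways on the surface and checking the two expressions agree precisely when $\eta$ solves the Riccati equation. Since the defining relation $y = -(\lambda/2 + \eta(t))x$ involves $t$ explicitly, the natural framework is to treat $(x,t)$ as parameters on the surface and ask whether the flow of \eqref{eq: main system in small neighborhood} preserves the constraint.

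First I would differentiate the constraint along a trajectory: assuming $y = -(\lambda/2 + \eta(t))x$ holds at some time, I compute
\begin{equation*}
\dot y = -\dot\eta\, x - \bigl(\tfrac{\lambda}{2}+\eta\bigr)\dot x = -\dot\eta\, x + \bigl(\tfrac{\lambda}{2}+\eta\bigr)^2 x,
\end{equation*}
using $\dot x = y = -(\lambda/2+\eta)x$. Next I would substitute the constraint into the second equation of \eqref{eq: main system in small neighborhood} and use $z(t) = z_0 e^{-\alpha t}$ (solution of the decoupled third equation), obtaining
\begin{equation*}
\dot y = -(z-1)x + \lambda\bigl(\tfrac{\lambda}{2}+\eta\bigr) x = \Bigl[1 - z_0 e^{-\alpha t} + \tfrac{\lambda^2}{2} + \lambda\eta\Bigr] x.
\end{equation*}

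Equating the two expressions for $\dot y$ and cancelling the common factor $x$, the cross term $\lambda\eta$ appearing in both $(\lambda/2+\eta)^2$ and on the right cancels, and what remains after collecting $\lambda^2/4$ and $\lambda^2/2$ is
\begin{equation*}
\dot\eta = \eta^2 + z_0 e^{-\alpha t} - 1 - \tfrac{\lambda^2}{4} = \eta^2 + a(t),
\end{equation*}
which is exactly \eqref{eq: Riccati equation}. Conversely, if $\eta$ solves this Riccati equation, then the quantity $y + (\lambda/2+\eta(t))x$ satisfies a homogeneous linear ODE along trajectories and vanishes identically on $W_\eta$, so $W_\eta$ is invariant.

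The computation is essentially bookkeeping; the only subtlety worth flagging is the decoupling of $z$: because the third equation of \eqref{eq: main system in small neighborhood} does not involve $x,y$, the evolution of $z$ along any trajectory is determined solely by its initial value $z_0$, which is what legitimately allows $z$ to enter the Riccati coefficient $a(t)$ as the explicit function $z_0 e^{-\alpha t}$ rather than as a genuine state variable. That reduction is what makes the 2D-with-parameter viewpoint of \eqref{eq: main system in small neighborhood} valid, and it is the one place where I would be careful to state the setup explicitly before doing the algebra.
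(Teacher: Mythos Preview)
Your proof is correct and is essentially the same as the paper's: the paper sets $w=y+\bigl(\tfrac{\lambda}{2}+\eta(t)\bigr)x$ and computes $\dot w|_{w=0}=(\dot\eta-a(t)-\eta^2)x$, which is precisely your two-way computation of $\dot y$ repackaged. Your explicit remark on the decoupling of $z$ and the homogeneous linear ODE for the constraint function are useful clarifications that the paper leaves implicit.
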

\begin{proof}
    The derivative of the function $w=y+\left(\frac{\lambda}{2}+\eta(t)\right)x$ with respect to the system~\eqref{eq: main system in small neighborhood} at $w=0$
    \begin{equation*}
        \dot{w}\Big|_{w=0}=\left(\dot{\eta} - a(t)-\eta^2\right)x
    \end{equation*}
    is identical zero at any solution of the equation~\eqref{eq: Riccati equation}. This implies that  $W_{\eta}$ is an integral manifold of the system~\eqref{eq: main system in small neighborhood}.
    \end{proof}

    Note that the parameter $z_0$ and initial condition $\eta(0)=\eta_0$ define continuum of solutions of the equation~\eqref{eq: Riccati equation}, and each of them generates the integral manifold $W_{\eta}$.
    Hence, our goal is to find a unique solution of~\eqref{eq: Riccati equation} defining the invariant manifold $W^{s}_{t}$.

Below we show that this manifold has two regions: saddle and node region where the manifold does not twist and the focus region where it rotates around $I$. 

\subsubsection{Saddle and node piece of $W^{s}_{t}$
\label{subsec:Saddle and node piece of $W^{s}_{t}$}}

    In trivial case $z_0=0$ the equation~\eqref{eq: Riccati equation} becomes $\dot{\eta}=-\left(1+\frac{\lambda^2}{4}\right)+\eta^2$, 
    and its equilibrium point $\eta=\sqrt{1+\frac{\lambda^2}{4}}$ defines the line~\eqref{eq: surface in the phase space}.
    This line coincides with the line $y=e_2x$ along the eigenvector $(1,e_2)$ of the saddle $O$.
Hence we need to find a bounded solution of the equation~\eqref{eq: Riccati equation} satisfying the condition 
\begin{equation}
    \eta(0)=\eta_0, \  z_0=1+\frac{\lambda^2}{4}, \  \displaystyle{\lim_{t \to \infty}} \eta(t)=\sqrt{1+\frac{\lambda^2}{4}},
    \label{eq: bounded solution}
\end{equation}
which defines the $W^s_t$ at the interval $z\in\left[0,1+\frac{\lambda^2}{4}\right]$. Due to~\eqref{eq: Riccati equation},~\eqref{eq: bounded solution}  the function $a(t)$ satisfies the condition 
\begin{equation*}
   -1-\frac{\lambda^2}{4}<a(t)<0, \ \ t\in [0,\infty).
\end{equation*}
This implies that we have the differential inequalities 
\begin{equation*}
    -\left(1+\frac{\lambda^2}{4}\right)+\eta^2< \dot{\eta}<\eta^2,
\end{equation*}
from which it follows the existence of bounded solution of the equation~\eqref{eq: Riccati equation}
lying at the interval $\left(0,\sqrt{1+\frac{\lambda^2}{4}}\right)$. This solution $\eta_s(t)$ is unstable and unique at this interval since the variational equation for $\eta=\eta_s(t)>0$
\begin{equation*}
    \dot{u}=2\eta_s(t)u
\end{equation*}
defines the positive Lyapunov exponent.

Since $a(t)$ decreases to $ -\left(1+\frac{\lambda^2}{4}\right)$ exponentially the condition~\eqref{eq: bounded solution} is valid and~\eqref{eq: surface in the phase space} for $\eta=\eta_s(t)$ is the equation of the tangent manifold $W^s_t$ of the form
\begin{equation*}
\begin{array}{c}
 W^s_{t}=\\
 \left\{ y=-\left(\frac{\lambda}{2}+\eta_s(t)\right)x,\;
     z=\left(1+\frac{\lambda^2}{4}\right)e^{-\alpha t},\; t>0\right\}.
\end{array}
\end{equation*}

\subsubsection{Focus piece of $W^s_{t}$
\label{subsec:Focus piece of $W^s_{t}$}}

Zero equilibrium of degenerate 2D system~\eqref{eq: main system in small neighborhood}, $\alpha=0$, for $z>1+\frac{\lambda^2}{4}$ becomes stable focus. The continuation of the tangent stable manifold $W^s_t$ into the region $z>1+\frac{\lambda^2}{4}$ is given by the solution of the equation~\eqref{eq: Riccati equation} in reverse time with  the initial condition 
$\eta_s(0)=\eta_0,\ \ \ z_0=1+\frac{\lambda^2}{4}$. That is the equation 
\begin{equation}
\begin{array}{c}
     \dot{\eta}=-\omega^2(t)-\eta^2, \quad \eta_s(0)=\eta_0,\\
     \\
     \omega(t)=\sqrt{\left(1+\dfrac{\lambda^2}{4}\right)\left(e^{\alpha t}-1\right)}.
\end{array}
\label{eq: Riccati equation in reverse time}
\end{equation}
Denoting this solution $\eta=\eta_f(t)$ we obtain the focus piece of $W^s_t$ in the form 
\begin{equation}
\begin{array}{c}
     W^s_{t}=\left\{ y=-\left(\dfrac{\lambda}{2}+\eta
     _f(t)\right)x,\;
     z=\left(1+\dfrac{\lambda^2}{4}\right)e^{\alpha t}\right\},\\ \eta_f(0)=\eta_s(0).
\end{array}
\label{eq: focus piece of W}
\end{equation}

Then we give a qualitative estimate of the solution $\eta=\eta_f(t)$, characterizing the form of $W_t^s$. The function $\omega^2(t)$ increases exponentially, hence for any $\omega(t_1)>0$
\begin{equation}
    \omega (t)> \omega(t_1), \ t>t_1.
    \label{eq: omega increases}
\end{equation}
Considering $\omega(t)=\omega(t_1)\triangleq\omega_1=\textrm{const}$ for $t\geq t_1$, i.e. ``freezing'' $\omega(t)$, we obtain a solution of the integrable in this case equation~\eqref{eq: Riccati equation in reverse time} in the form
\begin{equation}
    \eta(t)=-\omega_1 \tan (\omega_1 t).
    \label{eq:  solution  Riccati equation in reverse time}
\end{equation}
Next we choose the value $t_1$ such that 
\begin{equation*}
    \omega_1 t_1=-\frac{\pi}{2},
\end{equation*}
corresponding to blow up of the solution~\eqref{eq:  solution  Riccati equation in reverse time} $\displaystyle{\lim_{t \to t_1}} \ \eta(t)\to \infty,$ defining the vertical line of~\eqref{eq: focus piece of W}.
Then for $t>t_1$ the function~\eqref{eq:  solution  Riccati equation in reverse time} is decreasing up to the next blow up at $t=t_1+\frac{\pi}{\omega_1}$ when $\displaystyle{\lim_{t \to t_1+\frac{\pi}{\omega_1}}} \ \eta(t)=- \infty.$

Due to~\eqref{eq: omega increases} the solution of~\eqref{eq: Riccati equation in reverse time} reaches blow up earlier then~\eqref{eq:  solution  Riccati equation in reverse time}  at $t_2=t_1+\triangle t_1,\ \ \triangle t_1<\frac{\pi}{\omega_1}.$  Repeating the same procedure replacing $t_1$ with $t_2$ and then $t_k$ with $t_{k+1}$ we get a recurrent infinite sequence 
\begin{equation}
    t_{k+1}=t_k+\triangle t_k=t_1+\sum_{i=1}^{k} \triangle t_i, \ \ k=1,2,\dots,
    \label{recurrent infinite sequence}
\end{equation}
where
\begin{equation}
    \triangle t_k<\frac{\pi}{\omega_k}, \quad \omega_k\triangleq\omega(t_k), \ \ k=1,2,\dots 
    \label{delt t}
\end{equation}
According to~\eqref{eq: Riccati equation in reverse time}, the number series  $\displaystyle{\sum_{i=1}^{\infty}\frac{\pi}{\omega_i}}$ converges.
Hence due to \eqref{delt t} the value $\displaystyle{\sum_{i=1}^{\infty}\triangle t_i}$ is bounded, and the infinite sequence~\eqref{recurrent infinite sequence}  has finite limit $t_{\infty}=T$. Thus the function $\eta_f(t)$ in~\eqref{eq: focus piece of W} decreases from $\infty$ up to  $-\infty$ at each interval $(t_k,t_{k+1}), \ k=1,2,\dots$. Hence there exists a sequence of time value $\tau_k\in(t_k,t_{k+1})$ such that 
\begin{equation}
    \frac{\lambda}{2}+\eta_f(\tau_k)=0,\ \ \ k=1,2,\dots,
    \label{sequence of time value}
\end{equation}
corresponding to a family of horizontal lines in $W^s_t$ in the planes $\left\{x,\,y,\,z_k=\left(1+\frac{\lambda^2}{4}\right)e^{\alpha \tau_k}\right\}$.

Hereby the focus piece of tangent stable manifold is formed by the straight line~\eqref{eq: focus piece of W} moving in reverse time along $z-$axis up to the limit
\begin{equation*}
z^{*}=\left(1+\frac{\lambda^2}{4}\right)e^{\alpha T},
\end{equation*}
at the same time winding around $z-$axis  with increasing angle velocity $\omega(t)$ and therefore is the \textit{ruled double helical surface.}

\subsubsection{Nonlocal geometry of $W^s$
\label{subsec:Nonlocal geometry of $W^s$}}

We denote the curve 
\begin{equation*}
    \eta^s(t)=\left\{
\begin{array}{ll}
\eta_s(t),   &\textrm{\;for\quad} 0<z\leq1+\dfrac{\lambda^2}{4}, \\
 \eta_f(t),     &\textrm{\;for\quad}  1+\dfrac{\lambda^2}{4}<z<z^{*},
\end{array}
\right.
\end{equation*}
and the function 
\begin{equation*}
    \xi=y-K(t)x, \ \ K(t)\triangleq-\left(\frac{\lambda}{2}+\eta^s(t)\right),
\end{equation*}
where $K(t)$ is the slope of the straight line $\xi=0.$ Then the tangent manifold $W^s_t$ takes the form
\begin{equation}
    W^s_t=\Big\{y=K(t)x,\ \ z\in[0,z^{*})\Big\}.
    \label{tangent manifold}
\end{equation}

This manifold is a surface formed by a straight line moving along the $z-$axis making a turning around this axis without rotation in the saddle and node region $0<z\leq1+\frac{\lambda^2}{4}$ and rotating in the focus region $1+\frac{\lambda^2}{4}<z<z^{*}$.

Our goal is to describe the main features of the continuation of the stable manifold $W^s$ from the local part to its first intersection with the cross-section $S$.
We denote this first intersection $l^{s}=W^s\cap S$ which must  have two symmetrical parts $l^s=l^s_{+}\cup l^s_{-}$ such that
\begin{equation*}
l^{s}_{+}=W^s\cap S^{+}, \quad l^{s}_{-}=W^s\cap S^{-}.
\end{equation*}

\begin{proposition}
    The $x-$coordinates of the intersection $W^s\cap S$ are bounded.
\label{prop:(4) The x-coordinates of the intersection are bounded.}
\end{proposition}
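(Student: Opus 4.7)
The plan is to bound the $x$-coordinate of $l^s$ by means of the Lyapunov directing function $V$ of~\eqref{eq: Lyapunov function}. Since the cross-section $S$ lies in $\{y=0\}$, on $l^s$ the function simplifies to $V(x,0,z)=x^4/4-x^2/2$, which is monotone in $|x|$ for $|x|>1$ and blows up as $|x|\to\infty$. A uniform upper bound on $V$ restricted to $l^s$ thus yields the claimed boundedness of the $x$-coordinate, and the task reduces to establishing such an upper bound.

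My approach is to follow each trajectory of $W^s$ forward in time from a point $p\in l^s$. Because $W^s$ is the 2D stable manifold of the saddle $O$ and Shilnikov's condition~\eqref{eq: Shilnikov's condition} identifies $v_3=(0,0,1)$ as the leading stable eigendirection, every such trajectory enters the tube $U_\varepsilon(I)$ in finite time $T_p$ and afterwards converges to $I$ tangentially along $v_3$. At the entry time $T_p$ the trajectory lies on (or arbitrarily close to) the tangent manifold $W^s_t$ constructed above, and this manifold is confined to the bounded slab $\{x^2+y^2<\varepsilon^2,\; 0\leq z<z^*\}$ with $z^*<\infty$; hence $|V(\phi(T_p;p))|=O(\varepsilon^2)$. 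Integrating the identity
\begin{equation*}
V(p)-V(\phi(T_p;p))=-\int_0^{T_p}\dot V\bigl(\phi(s;p)\bigr)\,ds,
\end{equation*}
with $\dot V$ computed in~\eqref{eq: Lyapunov function derivative}, therefore reduces the problem to a uniform control of this transit-time integral over all $p\in l^s$.

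The principal obstacle lies in controlling the sign-indefinite coupling contribution to $\dot V$ along orbits that oscillate between the half-spaces $y>0$ and $y<0$ on their way to $O$: the damping part $\lambda y^2$ is nonnegative, but the bilinear coupling changes sign. To handle it I would combine $V$ with an auxiliary quantity of the form $\kappa(z-z_0)^2$ or $\kappa x^2 z$, chosen so that the composite functional has a sign-definite derivative on a wedge of phase space containing the forward images of $l^s$. As a parallel route, I would argue by contradiction: if a sequence $p_n\in l^s$ satisfied $|x_{p_n}|\to\infty$, the trajectories emanating from $p_n$ would be required to dissipate a $V$-budget of order $x_{p_n}^4/4$ before entering $U_\varepsilon(I)$, forcing the transit times $T_{p_n}$ to diverge in contradiction with the uniform entrance structure of $W^s_t$. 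The hardest point is making the control of the coupling integral fully rigorous, since UNF~\eqref{eq: main system} does not admit a standard ellipsoidal trapping region---the $\beta x^4$ term that surfaces in any such attempt destroys coercivity at large $|x|$.
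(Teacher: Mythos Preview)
Your approach is genuinely different from the paper's, and as you yourself flag, it does not close. The obstacle you identify---the sign-indefinite coupling term $-yxz$ in $\dot V$---is real, and neither of your proposed remedies is carried through. Augmenting $V$ by $\kappa(z-z_0)^2$ or $\kappa x^2 z$ will not produce a functional that is simultaneously coercive in $x$ on $\{y=0\}$ and has sign-definite derivative along orbits of~\eqref{eq: main system}; the cross-terms reintroduce indefiniteness. The contradiction sketch is also incomplete: there is no mechanism forcing a uniform bound on the transit times $T_{p}$ to $U_\varepsilon(I)$, so the assertion that $T_{p_n}\to\infty$ ``contradicts the uniform entrance structure of $W^s_t$'' has no content---trajectories in $W^s$ starting far away may legitimately take arbitrarily long to reach the tube. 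And since $\dot V$ can be positive on large portions of phase space (wherever $yz<0$), a large initial value of $V$ does not force a long dissipation time in the first place.

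The paper bypasses all of this with a comparison argument. It introduces the planar auxiliary system
\[
\dot x=y,\qquad \dot y=-(x^2-1)x-\lambda y,
\]
obtained from UNF~\eqref{eq: main system} by setting $z=0$ in the $\dot y$-equation. This system is globally stable and its stable separatrix of the saddle at the origin meets the $x$-axis for the first time at a finite abscissa $x_0^s$ (equal to $\sqrt{2}$ when $\lambda=0$, increasing with $\lambda$). Lifting this separatrix as the cylindrical surface $S^{+}_{aux}=\{y=y^s(x),\;z>0,\;x\in(0,x_0^s]\}$, one checks by a single sign computation that the difference of $\dot y$-components between the auxiliary and the full system equals $xz>0$; hence the UNF vector field is transversal to $S^{+}_{aux}$ with a definite orientation. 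This one-line transversality replaces your entire energy-budget analysis and immediately confines the $x$-coordinates of $W^s\cap S$ to $|x|<x_0^s$. Your worry about the absence of an ellipsoidal trapping region is thus a red herring for this particular proposition: the comparison barrier does the job without any global coercivity.
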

\begin{proof}
    We use the 2D auxiliary system
    \begin{equation}
    \dot{x}=y, \ \ \dot{y}=-(x^2-1)x-\lambda y,
    \label{2D auxiliary system}
\end{equation}
having the integral~\eqref{eq: Lyapunov function} in the conservative case $\lambda=0$. For $\lambda>0$ this system is globally stable since the derivative~\eqref{eq: Lyapunov function derivative} is negative for $z=0$.

The stable separatices of the saddle $O(0,0)$  while being in the region $xy<0$ intersect first time the $x$-axis in the points $(x^s_0,0)$, where $x^s_0\big|_{\lambda=0}=\sqrt{2}$, and increases with the parameter $\lambda$ increase.
Hence the stable separatrix of the system~\eqref{2D auxiliary system} at the interval $x\in (0,x^s_0]$ $\Big(x\in [-x^s_0,0)\Big)$ can be given by the equation $y=y^s(x)$ $\Big(y=-y^s(-x), \textrm{respectively}\Big)$ such that $y^s<0$ for $x\in(0,x_0^s)$ and $y^s(x_0^s)=0$.

 The vector field of UNF~\eqref{eq: main system} on the surface $S^{+}_{aux}=\left\{y=y^s(x), \ z> 0, \ x\in (0,x_0^s]\right\}$ is oriented towards the region $y>y^s(x)$.

This statement follows from the fact that the difference $\delta$ of $\dot{y}-$component of the auxiliary system~\eqref{2D auxiliary system} and $\dot{y}-$component of the vector field of UNF~\eqref{eq: main system} satisfies the condition $\delta=xz>0 \ \textrm{for} \ x>0, \ z>0. $

From this it's easy to obtain that the $x-$coordinates of $W^s\cap S$  can not be out of the interval $|x|<x^s_0.$
\end{proof}

Now we will show the existence of the intersection $l^s$ and define its properties.
It turns out that the ruled surface~\eqref{tangent manifold} can be used in the study of nonlocal part of stable manifold $W^s$.
\begin{lemma}
       The stable manifold $W^s$ rotates around the invariant line $I$ faster then the tangent manifold $W^s_t$ and has a sequence of intersections with the cross-section $S$ such that the first of them is two  curves which can be written in the form 
\label{lem:(2)manifold rotation around the invariant line}
 \begin{equation}
 l^{s}_{\pm}=\left\{x=\pm x^s(z),\, z\in(0,z^{*}),\, y=0\right\}.
\label{symmetrical system}
\end{equation}
\end{lemma}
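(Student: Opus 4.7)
The plan is to reduce the lemma to an angular comparison between the full UNF flow on $W^s$ and its linearization on $W^s_t$, and then to patch this local picture to the cross-section $S$ using the boundedness of $|x|$ given by Proposition~\ref{prop:(4) The x-coordinates of the intersection are bounded.}. First I would fix a trajectory $\gamma\subset W^s$ which converges to $O$ along the leading direction $v_3$ as $t\to+\infty$ and track it in reverse time in polar coordinates $(x,y)=\rho(\cos\theta,\sin\theta)$. A direct computation of $\dot\theta=(x\dot y-y\dot x)/\rho^2$ along UNF~\eqref{eq: main system} and along its linearization~\eqref{eq: main system in small neighborhood} gives
\[
\dot\theta_{\text{full}}-\dot\theta_{\text{lin}}=-\frac{x^4}{\rho^2}\le 0,
\]
the difference being the contribution of the nonlinear term $-x^3$ omitted from the $\dot y$-equation in~\eqref{eq: main system in small neighborhood}. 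In the focus region both angular velocities are negative, so the angle on $\gamma$ decreases strictly faster than the angle of the ruled line of $W^s_t$ at the same height $z(t)$. This is the precise sense in which $W^s$ rotates around $I$ faster than $W^s_t$; in particular $\gamma$ must cross $\{y=0\}$ at some reverse time $\tau^s_k<\tau_k$ for every $k$, which produces a sequence of intersections of $W^s$ with the cross-section $S$ accumulating as the corresponding sequence for $W^s_t$ does.

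To obtain the first of these intersections, I would parametrize trajectories on $W^s$ by the $z$-coordinate $z_0\in(0,z^*)$ at which they become $\varepsilon$-close to $I$. If the first reverse-time crossing of $\{y=0\}$ occurs while the trajectory is still inside $U_\varepsilon(I)$, then the crossing point is pinned down by the Riccati/angular analysis above. Otherwise the trajectory exits $U_\varepsilon(I)$ first, and the vector-field orientation argument of Proposition~\ref{prop:(4) The x-coordinates of the intersection are bounded.} confines it to the strip $|x|\le x^s_0$ and forces it to cross $\{y=y^s(x)\}$ from above; combined with $\dot x=y$, this produces a unique transversal first crossing of $\{y=0,\,x>0\}$. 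Letting $z_0$ vary continuously over $(0,z^*)$ traces out a continuous curve $x=x^s(z)$ in $S^+$, and the involution $(x,y,z)\mapsto(-x,-y,z)$ yields its mirror image in $S^-$, producing the representation~\eqref{symmetrical system}.

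The main obstacle will be matching the local angular estimate inside $U_\varepsilon(I)$ to the global behaviour outside it: once $|x|$ is of order one, the term $x^4/\rho^2$ is no longer a small perturbation and the clean linear comparison breaks down. I would handle this by combining the inside estimate with the auxiliary system~\eqref{2D auxiliary system}, whose vector field orientation on $S^+_{aux}$ (established in the proof of Proposition~\ref{prop:(4) The x-coordinates of the intersection are bounded.}) confines the trajectory between $\{y=y^s(x)\}$ and $\{y=0\}$ all the way to the first crossing. A secondary technical point is the graph form $x=\pm x^s(z)$ itself: to exclude self-intersections I would establish monotonicity of the flight time to $\{y=0\}$ in $z_0$ via the variational equation for $\partial\theta/\partial z_0$, whose sign is controlled by the same Riccati comparison used in the first step.
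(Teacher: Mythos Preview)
Your plan is essentially the paper's argument in polar-angle language rather than in the $\xi$-coordinate the paper uses.  The paper sets $\xi=y-K(t)x$ with $K(t)=-\bigl(\tfrac{\lambda}{2}+\eta^s(t)\bigr)$ (so $\{\xi=0\}=W^s_t$) and computes $\dot\xi|_{\xi=0}$ along the full UNF flow, obtaining a quantity with sign $-\operatorname{sign}x$.  This is a pure transversality statement: the full flow crosses the \emph{fixed} surface $W^s_t$ into the region $x\xi<0$, hence in reverse time a $W^s$-orbit that starts tangent to $W^s_t$ near $I$ can never recross it and must wind at least as fast.  No comparison of two different flows is needed.

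Your identity $\dot\theta_{\text{full}}-\dot\theta_{\text{lin}}=-x^4/\rho^2$ is correct pointwise, but the step ``the angle on $\gamma$ decreases faster than the angle of the ruled line at the same height $z(t)$'' silently identifies $\dot\theta_{\text{lin}}$ at the point of $\gamma$ with the rate of change of the surface angle $\theta_{\text{surf}}(z)$ along $\gamma$.  These differ, because $W^s_t$ is built from the linearized $z$-equation $\dot z=-\alpha z$ while $\gamma$ obeys $\dot z=-\alpha z+\beta x^2$; the missing term is $\theta_{\text{surf}}'(z)\cdot\beta x^2$.  So your comparison does not yet prove one-sided crossing of $W^s_t$.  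The cleanest repair is exactly the paper's: differentiate the surface-defining function along the full flow and read off the sign.  Your handling of the nonlocal part via Proposition~\ref{prop:(4) The x-coordinates of the intersection are bounded.} and the auxiliary system~\eqref{2D auxiliary system} matches the paper's, and your plan for the graph form $x=\pm x^s(z)$ (monotonicity of the hitting time in $z_0$ via the variational equation) is actually more explicit than what the paper provides---the paper simply asserts the representation~\eqref{symmetrical system} from the structure of $W^s_t$ and the boundedness of $|x|$.
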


\begin{proof}
 The vector field of UNF~\eqref{eq: main system} is transversal to the tangent manifold $W^s_{t}$ and oriented towards the region 
\begin{equation}
x\xi <0.
\label{eq:xi<0}
\end{equation}
Indeed the derivative of the function $\xi$ with respect to UNF~\eqref{eq: main system} at $\xi=0$
\begin{equation}
    \dot{\xi}|_{\xi=0}=-\left(x^2+\frac{\lambda^2}{4}\right)x
    \label{derivative xi}
\end{equation}
is negative (positive) for $x>0$ ($x<0$, respectively), hence orientation of the vector field of UNF~\eqref{eq: main system} at $\xi=0$ is defined by inequality~\eqref{eq:xi<0}. 

This implies that the trajectories in $W^s$ leaving the neighborhood $U_{\varepsilon}(I)$ in reverse time can not intersect the surface $W^s_t$ and therefore get into the region $x \xi>0$ staying in it up to the moment of their intersection with the cross-section S.  Secondly, the turn and rotation of the tangent manifold $W^s_t$ make the stable manifold $W^s$ to rotate  faster then $W^s_t$, and due to~\eqref{derivative xi} the faster the larger the modulus of the $x-$coordinates of the orbits in $W^s$.

In a small neighborhood of invariant line $I$ the manifold $W^s$ is close to $W^s_t$. Therefore the nonlocal continuation of $W^s$ is also close to the continuation of intersection curves 
\begin{equation}
\begin{array}{ll}
     W^s_t\cap U_{\varepsilon}=\Big\{
 y=K(t)x,\, x^2+y^2=\varepsilon^2,\\ z=\left(1+\dfrac{\lambda^2}{4}\right)e^{-\alpha t},\, 0\leq z\leq z^{*} \Big\} 
      \label{eq:intersection curves}
\end{array}
\end{equation}
along the trajectories of UNF~\eqref{eq: main system} in reverse time.

Due to~\eqref{sequence of time value} the slope $K(t)$ changes the sign at the sequence of time $\tau_k,\ k=1,2,\dots$
such that 
\begin{equation*}
    \begin{array}{rr}
        K(t)<0,\quad& \tau_{2i}<t<\tau_{2i+1}, \\
         K(t)>0,\quad& \tau_{2i+1}<t<\tau_{2i+2},  
    \end{array}
    \quad i=0,1,2,\dots
\end{equation*}
with initial value $K<0$, $t\in(0,\tau_1)$ corresponding to the saddle and node piece $0\leq z<1+\frac{\lambda^2}{4}$ for $i=0$.
From Proposition~\ref{prop:(4) The x-coordinates of the intersection are bounded.} and~\eqref{eq:intersection curves} it follows that the trajectories in $W^s$ leaving the neighborhood $U_{\varepsilon}(I)$ in reverse time can not intersect the surface $W^s_t$ and therefore get into the region $x\xi>0$.
Since the $x-$coordinates of the trajectories in $W^s$ are bounded due to Proposition~\ref{prop:(4) The x-coordinates of the intersection are bounded.} the stable manifold $W^s$ has the first intersection with the cross-section $S$ along symmetrical curves $l_{+}^s$ and $l_{-}^s$~\eqref{symmetrical system}.

The function $x=x^s(z)$ satisfies the condition
\begin{equation}
\begin{array}{c}
    \begin{array}{rl}
    x^s\big(z(t)\big) = 0 &\textrm{ for\quad}t=\tau_k,\\
    K(t)\,x^s\big(z(t)\big) < 0 &\textrm{ for\quad}t \neq \tau_k,
   \end{array}    
\end{array}\quad k = 1,2,\dots\,.
    \label{eq:x^ssatisfies the condition}
\end{equation}
\end{proof}

\begin{figure*}
    \centering   \includegraphics[width=0.27\linewidth]{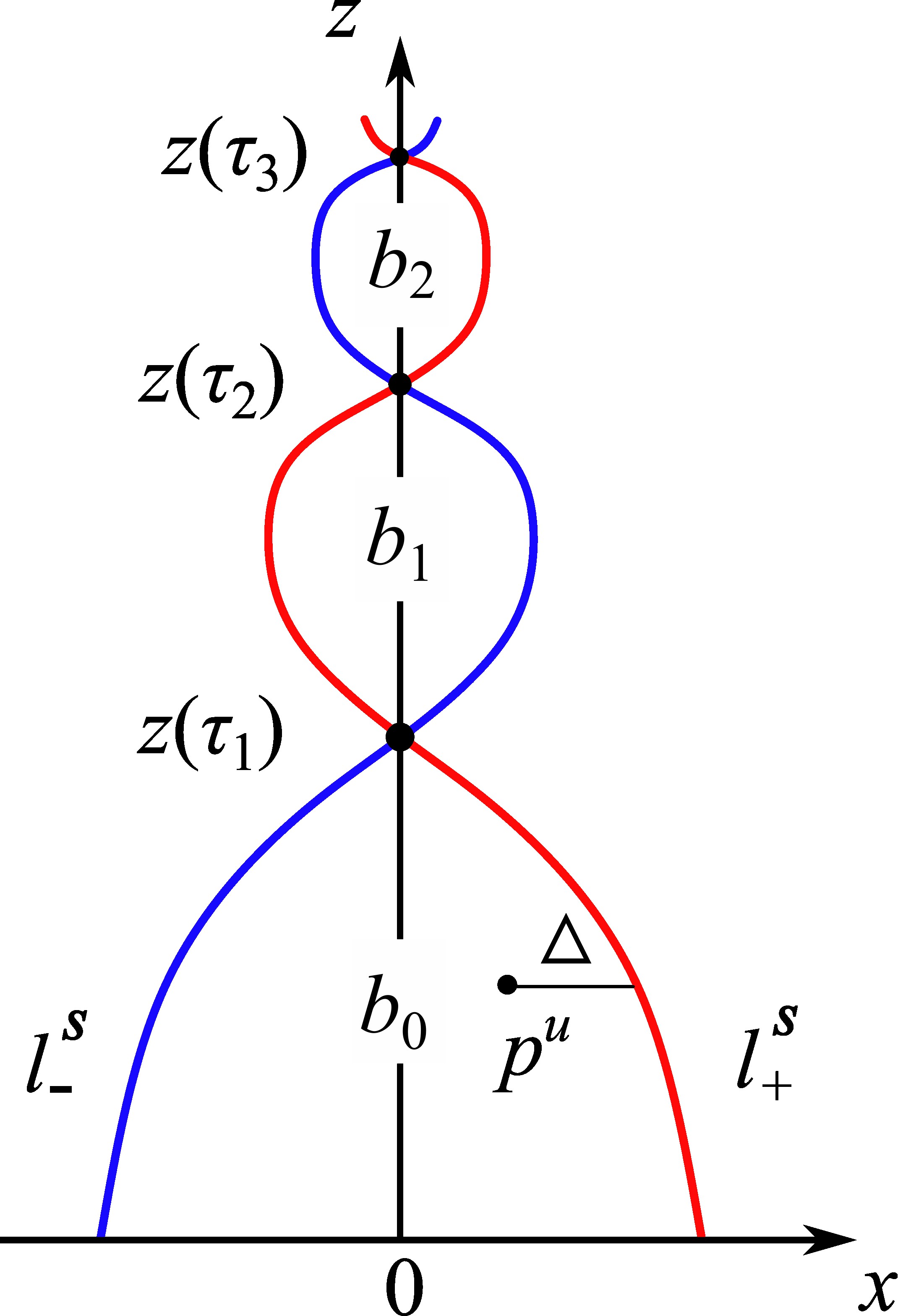}
    \smallskip
    \caption{The qualitative form of the intersection lines $l^s_{\pm}=W^s\cap S$ in Lemma~\ref{lem:(2)manifold rotation around the invariant line} and the regions $b_k$ in Corollary~\ref{cor: 5.1}. The distance $\triangle$ between $p^u=W^s\cap S$ and $l^s_{+}$ is indicated.}
    \label{fig: f2}
    \vspace*{12pt}
\end{figure*}

\begin{corollary}
Due to~\eqref{eq:x^ssatisfies the condition}, the curves $l^s_{\pm}$ form on the cross-section $S$ the infinite sequence of domains
    \begin{equation}
    \begin{array}{c}
     b_k=\Big\{|x|<|x^s(z(t)|,\, \tau
        _k\leq t\leq \tau_{k+1},\, y=0\Big\},\\
         k=0,1,2,\ldots\,.
    \end{array}
    \label{eq: sequence of domains}
    \end{equation}
    \label{cor: 5.1}
\end{corollary}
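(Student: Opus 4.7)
The plan is to read~\eqref{eq:x^ssatisfies the condition} as a direct prescription of the geometry of the curves $l^s_\pm$ on the cross-section $S$. Each point of $l^s_+$ corresponds to a unique time $t\in[0,T)$ via the height coordinate $z(t)=(1+\lambda^2/4)e^{\alpha t}$ inherited from the ruled parameterization of $W^s_t$; the companion curve $l^s_-$ is obtained from $l^s_+$ by the involution $(x,y,z)\mapsto(-x,-y,z)$ of UNF~\eqref{eq: main system}. In this parameterization, $x^s(z(t))$ records the signed horizontal displacement of $l^s_+$ from the invariant line $I$ at height $z(t)$.

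First I would use the first clause of~\eqref{eq:x^ssatisfies the condition} to observe that $l^s_+$ touches $I$ precisely at the heights $z=z(\tau_k)$; by the involution, $l^s_-$ meets $I$ at exactly the same heights, so the two curves share only these discrete contact points. Next, the sign alternation of $K(t)$ established after~\eqref{sequence of time value}, together with the second clause $K(t)\,x^s(z(t))<0$ of~\eqref{eq:x^ssatisfies the condition}, forces $x^s(z(t))$ to keep a definite nonzero sign on each open interval $(\tau_k,\tau_{k+1})$ (opposite to that of $K(t)$). By continuity, the arcs of $l^s_\pm$ over $[\tau_k,\tau_{k+1}]$ are then simple curves that return to $I$ at both endpoints and together enclose the open ``lens'' $b_k$ defined in~\eqref{eq: sequence of domains}; each $b_k$ lies entirely in $S^+$ or $S^-$ according to the sign of $K(t)$ on the interior.

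Finally I would invoke the construction~\eqref{recurrent infinite sequence}--\eqref{delt t}, which yields an infinite ordered sequence $\tau_1<\tau_2<\cdots$ accumulating at the finite limit $t_\infty=T$; adopting the convention $\tau_0=0$ (consistent with the initial interval $K(t)<0$, $t\in(0,\tau_1)$) produces the countable family $\{b_k\}_{k\ge 0}$ of pairwise disjoint domains, as claimed. The main step is bookkeeping rather than analysis: one must track the parity of $K(t)$ to identify the half-plane $S^\pm$ in which each $b_k$ sits, and verify pairwise disjointness, which follows from the strict monotonicity of $z(t)$ and the fact that the closed $t$-intervals $[\tau_k,\tau_{k+1}]$ meet only at common endpoints.
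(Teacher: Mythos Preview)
Your argument is essentially aligned with the paper's treatment, which offers no separate proof: the corollary is stated as an immediate consequence of~\eqref{eq:x^ssatisfies the condition}, and the text following it merely remarks that the domains are symmetric with respect to $I$ and that adjacent domains adjoin at the points $(0,0,z(\tau_k))$.

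There is, however, one geometric mischaracterization you should correct. You assert that ``each $b_k$ lies entirely in $S^+$ or $S^-$ according to the sign of $K(t)$ on the interior,'' and later that one must track the parity of $K(t)$ to decide in which half-plane each $b_k$ sits. This is not so: the defining inequality $|x|<|x^s(z(t))|$ uses the absolute value, so on each interval $(\tau_k,\tau_{k+1})$ the domain $b_k$ is the open lens bounded on one side by the arc of $l^s_+$ (at $x=x^s$) and on the other by the arc of $l^s_-$ (at $x=-x^s$); it is symmetric with respect to $I$ and meets both $S^+$ and $S^-$. What the sign of $x^s(z(t))$ (equivalently of $K(t)$) determines on each interval is only which of the two curves $l^s_+$, $l^s_-$ supplies the right-hand boundary and which the left, not the half-plane in which $b_k$ resides. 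With this adjustment your outline matches the paper's intended reading.
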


They are symmetric with respect to the invariant line $I$, adjacent domains adjoin each other at points $(0,0, z(\tau_k)).$ It is funny that the first domain $b_0$ lying in saddle and node region looks like a skirt and the rest $b_k$ look like shrinking bubbles (see Fig.~\ref{fig: f2}).

\begin{proposition}
There exists a layer $L=\{ (x,y) \in \mathbb{R}^2, \ z \in (z_1,z_2) \}$ where constants $z_{1,2}$ satisfy the condition $1<z_1<z_2$, such that any orbit of UNF~\eqref{eq: main system} lying in $L$  tends to the neighborhood $U_{\varepsilon}$ of the invariant line $I$.
\label{prop:(5) layer, where constants  tends to the neighborhood of the invariant line.}
 \end{proposition}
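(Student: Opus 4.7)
My plan is to find a Lyapunov function defined on the layer that decreases strictly along every orbit trapped in $L$, forcing the $(x,y)$-components to collapse onto the $z$-axis. The heuristic is that for $z>1$ the frozen $(x,y)$-subsystem~\eqref{eq: main system in small neighborhood} is a damped oscillator with eigenvalues of negative real part, so any orbit kept in a slab with $z$ bounded away from $1$ should spiral in towards $I$.

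First I would introduce the candidate
\begin{equation*}
W(x,y,z) = \frac{y^{2}}{2} + \frac{(z-1)\,x^{2}}{2} + \frac{x^{4}}{4},
\end{equation*}
which is nonnegative on $\{z\geq 1\}$ and vanishes precisely on $I$. A direct differentiation along UNF~\eqref{eq: main system} shows that the cross terms $\pm(z-1)xy$ produced by $y\dot y$ and $(z-1)x\dot x$ cancel, the stray $-x^{3}y$ coming from $y\dot y$ is killed by $x^{3}\dot x=x^{3}y$, and the $\dot z$-contribution supplies $-\tfrac{\alpha z}{2}x^{2}+\tfrac{\beta}{2}x^{4}$, leaving
\begin{equation*}
\dot W = -\lambda y^{2} - \tfrac{1}{2}\,x^{2}\bigl(\alpha z-\beta x^{2}\bigr).
\end{equation*}

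Next I would fix the geometry of the layer. A standard ellipsoidal Lyapunov estimate (identical in spirit to the one used for the classical Lorenz system) yields a global absorbing ball for UNF~\eqref{eq: main system}, and in particular a constant $M=M(\alpha,\beta,\lambda)$ with $|x(t)|\le M$ on every orbit. I would then pick
\begin{equation*}
z_{1}>\max\!\Bigl\{1,\;\tfrac{\beta M^{2}}{\alpha}\Bigr\},\qquad z_{2}>z_{1},
\end{equation*}
so that on every orbit lying in $L$ one has $\alpha z-\beta x^{2}\ge \alpha z_{1}-\beta M^{2}=:2\delta>0$, whence $\dot W\le -c(x^{2}+y^{2})$ with $c=\min(\lambda,\delta)>0$.

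Finally, since $W\ge 0$ and $\dot W\le -c(x^{2}+y^{2})$, integration gives $\int_{0}^{\infty}(x^{2}+y^{2})\,dt\le W(0)/c<\infty$; boundedness of $(x,y)$ together with that of the right-hand side of UNF~\eqref{eq: main system} makes $x^{2}+y^{2}$ uniformly continuous in $t$, so Barbalat's lemma forces $x(t)^{2}+y(t)^{2}\to 0$ and the orbit enters $U_{\varepsilon}(I)$ for any prescribed $\varepsilon>0$.

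The delicate step, and the reason the statement is a \emph{layer} assertion rather than a global one, is the algebraic design of $W$: the naive energy $\tfrac12 y^{2}+\tfrac12(z-1)x^{2}$ carries an indefinite $-x^{3}y$ term that the quartic correction $x^{4}/4$ eliminates exactly, but the surviving $+\tfrac{\beta}{2}x^{4}$ in $\dot W$ must then be dominated by $\tfrac{\alpha z}{2}x^{2}$, which is only possible once $z_{1}$ is chosen large relative to the radius of the absorbing ball.
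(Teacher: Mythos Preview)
Your computation of $\dot W=-\lambda y^{2}-\tfrac12 x^{2}(\alpha z-\beta x^{2})$ is correct, and the Barbalat closure is sound once you have $\alpha z-\beta x^{2}\ge 2\delta>0$ along the orbit. The real gap is the step that produces this inequality: you invoke a ``standard ellipsoidal Lyapunov estimate'' giving a global absorbing ball $|x|\le M$, and then set $z_{1}>\beta M^{2}/\alpha$. But the classical Lorenz trapping argument (with $r x^{2}+a y^{2}+a(z-2r)^{2}$, say) requires all diagonal damping coefficients positive; in the generalized system~\eqref{eq: generalized Lorenz} that means $q>0$, and by Proposition~\ref{prop:(2)five conditions} this covers only the Lorenz-like half $\lambda>A$ of the UNF parameter space. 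For the Chen-like regime $q<0$ ultimate boundedness is true but is a separate, non-trivial result, not the ``identical in spirit'' estimate you cite. Nor does the hypothesis ``the orbit lies in $L$'' furnish a pointwise bound on $x$: it only yields the time-average estimate $\tfrac{1}{T}\int_{0}^{T}x^{2}\,dt\le \alpha z_{2}/\beta+O(1/T)$ from $\dot z=-\alpha z+\beta x^{2}$, which is not enough to kill the $+\tfrac{\beta}{2}x^{4}$ term in $\dot W$.

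The paper avoids this issue by a different choice of Lyapunov function,
\[
V=\frac{c_{1}x^{2}}{2}+\frac{x^{4}}{4}+c_{2}\,xy+\frac{y^{2}}{2},
\]
which is positive definite in $(x,y)$ for $c_{1}>c_{2}^{2}$ and, crucially, does not depend on $z$. The cross term $c_{2}xy$ is the device you are missing: differentiating it brings in $c_{2}x\dot y$, which contributes both $-c_{2}(z-1)x^{2}$ and $-c_{2}x^{4}$ to $\dot V$, so that
\[
\dot V=-\bigl[c_{2}(z-1)x^{2}-(C-z)xy+(\lambda-c_{2})y^{2}\bigr]-c_{2}x^{4},\qquad C=c_{1}+1-\lambda c_{2}.
\]
For $c_{2}<\lambda$ the bracket is a positive-definite quadratic form in $(x,y)$ precisely when $z$ lies in an explicit interval $(z_{1},z_{2})$ (the discriminant condition), and then $\dot V<0$ for all $(x,y)\neq(0,0)$ with no restriction on $|x|$. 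The argument is therefore self-contained. If you wish to salvage your route, either supply an absorbing-ball proof valid for all positive $(\alpha,\beta,\lambda)$, or simply graft a term $c\,xy$ onto your $W$ so that the resulting $-cx^{4}$ dominates the offending $+\tfrac{\beta}{2}x^{4}$.
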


 \begin{proof}
     Consider the Lyapunov function 
     \begin{equation*}
         V=\frac{c_1x^2}{2}+\frac{x^4}{4}+c_2xy +\frac{y^2}{2},
         \end{equation*}
    where positive parameters $c_1$ and $c_2$ when $V$ satisfy the condition $c_1>c_2^2$ when $V$ is positive definite.
    The time derivative $\dot{V}$ with respect to UNF~\eqref{eq: main system} reads 
         \begin{equation*}
         \dot{V}=-\left(c_2(z-1)x^2-\left(C-z\right)xy +(\lambda-c_2)y^2\right)-c_2x^4,
         \end{equation*}
    where $C=c_1+1-\lambda c_2$.
    The function $\dot{V}$ is negative definite under the condition $\lambda>b$ for $z\in (z_1,z_2)$, where
    \begin{equation*}
             z_{1,2}=C+\frac{B}{2}\mp \sqrt{BC+\frac{B^2}{ 4}},\quad B=4c_2(\lambda-c_2).
    \end{equation*}
    Hence, an orbit $\Big(\tilde{x}(t),\,\tilde{y}(t),\,\tilde{z}(t)\Big)\in L$ satisfies the condition $\displaystyle{\lim_{t \to \infty}} \Big(\tilde{x}(t),\,\tilde{y}(t)\Big)=(0,0).$
 \end{proof}

\section{Limiting cases of UNF
\label{sec:Special cases}}
Consider important limiting cases of the system parameters.

\subsection{The case of small $\alpha$}

For $\alpha=0$ all point of the invariant line $I$  are equilibrium points.
For $z<1$, they are saddles; for $1<z<1+\frac{\lambda^2}{4}$, they are stable nodes; and for $z>1+\frac{\lambda^2}{4}$, they transform from nodes to stable foci.
The unstable  manifold $W^u$ of the degenerate saddle $O(0,0,0)$ due to~\eqref{eq: small neighborhood} enters the region $z>0$. 

Due to $\dot{z}=\beta x^2>0$, $z-$ coordinates of all trajectories of UNF~\eqref{eq: main system} besides $I$ increases for $\beta>0$ and reaches the invariant region $G_{+}=\{x\neq 0,\ y\neq 0, \ z>1\}$. 
From Proposition~\ref{prop:(5) layer, where constants  tends to the neighborhood of the invariant line.} it follows that each orbit of UNF~\eqref{eq: main system} enters the neighborhood $U_\varepsilon$ through a point $\left(x_0,y_0,z_0\ | \ x_0^2+y_0^2=\varepsilon^2, \ z_0>1\right)$.
Considering for simplicity the coordinate $z_0>1+\frac{\lambda^2}{4}$ and denoting $\Omega^2=z_0-\left(1+ \frac{\lambda^2}{4} \right)>0$ we introduce the solution of linearized UNF~\eqref{eq: main system} at $I$ in the form
\begin{equation}
    \begin{array}{l}
         x_e(t)=\rho e^{-\frac{\lambda}{2}t}\sin(\Omega t+\varphi),\\
         \\
         y_e(t)=-\rho\sqrt{z_0-1}e^{-\frac{\lambda}{2}t}\sin\left(\Omega t +\varphi-\arctan\frac{2\omega}{\lambda}\right),
         \end{array}
         \label{s1}
\end{equation}
where $\rho$ and $\varphi$ are given by the initial conditions 
\begin{equation}
    \begin{array}{l}
        x_0=\rho\sin \varphi, \\
         y_0=-\rho\sqrt{z_0-1}\sin\left(\varphi-\arctan\frac{2\Omega}{\lambda}\right). 
    \end{array}
    \label{s2}
\end{equation}

Then the increase of the coordinate $z$ inside $U_{\varepsilon}$ is defined by the equation 
\begin{equation}
    \dot{z}=\beta x^2_e (t).
    \label{s3}
\end{equation}
From~\eqref{s1}-\eqref{s3} we obtain the explicit solution 
\begin{equation*}
    \begin{array}{l}
        z(t)=z_0+\beta \rho^2 M(t), \\
        \\
        M(t)= \int_{0}^{t} e^{-\lambda t'}\sin^2(\Omega t'+\varphi) \,dt'. 
    \end{array}
\end{equation*}
Obviously the function $M(t)$ is bounded and
\begin{equation*}
    \displaystyle{\lim_{t \to \infty}} M(t)\triangleq M_{\infty}<\frac{1}{2\lambda}.
\end{equation*}
Hence each trajectory of UNF~\eqref{eq: main system} for $\alpha=0$ does not go to infinity and tends to one of the equilibrium states of the invariant line $I$.

    The unstable manifold $W^u$ of the degenerate saddle $O(0,0,0)$ of UNF~\eqref{eq: main system}, $\alpha=0$ tends to an equilibrium point $I_u(0,0,z_u)\in I$, where 
\begin{equation}
    z_u=z_0+\beta \rho^2M_{\infty}.
    \label{eq:z_u}
\end{equation}

The point $I_u$ as well is the limiting point for the symmetrical manifold $W_{-}^u$ defined at $O$ by~\eqref{eq: small neighborhood} for $x<0$.

\begin{proposition}
    For infinitesimal $\alpha>0$ in a small neighborhood of point $I_u$ there exists the turning point $p_0^u \in \left(W^u\cap S^{+}\right)$ after which the orbit $W^u$ begins to move downwards along the invariant line $I$ rotating around it in the focus region and leaving $U_{\varepsilon}(I)$ in the saddle region.
    \label{prop:(6) small alfa}
\end{proposition}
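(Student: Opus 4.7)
The plan is a singular-perturbation argument that combines the just-established $\alpha=0$ analysis with the local linearized description of Section 3.2.1 and continuous dependence of solutions on $\alpha$.

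First, I would invoke continuous dependence of solutions of UNF~\eqref{eq: main system} on $\alpha$ on compact time intervals. For $\alpha=0$ the unstable separatrix $W^u$ first meets $S^{+}$ at $p^u$ (Lemma~\ref{lem:(1)intersection of manifolds}), re-enters $U_\varepsilon(I)$, and converges to $I_u(0,0,z_u)$ with $z_u$ given by~\eqref{eq:z_u}. Choosing $z_0$ large enough so that $z_u>1+\tfrac{\lambda^2}{4}$ places $I_u$ in the focus region. Fix $\delta<\varepsilon$ and pick $T$ so that the $\alpha=0$ orbit lies in the $\delta/2$-ball around $I_u$ for all $t\geq T$. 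Then for $\alpha>0$ sufficiently small the perturbed orbit at time $T$ enters $U_\varepsilon(I)$ with $z$-coordinate within $\delta$ of $z_u$, still in the focus region.

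Second, I would analyze the perturbed dynamics inside $U_\varepsilon(I)$. For $z$ frozen the $(x,y)$-block of~\eqref{eq: main system in small neighborhood} rotates around $I$ at frequency $\Omega(z)=\sqrt{z-1-\lambda^2/4}$ with radial contraction rate $\lambda/2$. The full $z$-equation $\dot z=-\alpha z+\beta x^2$ acts as a slow drift: once the fast $e^{-\lambda t/2}$ transient in $(x,y)$ has shrunk the amplitude, the forcing $\beta x^2$ is exponentially small in $t$ and $\dot z\approx -\alpha z$, so $z(t)$ decreases monotonically on the slow time scale and sweeps successively through the focus, node and saddle regions of $I$. The rigorous version of this claim is obtained via a Lyapunov estimate modelled on Proposition~\ref{prop:(5) layer, where constants  tends to the neighborhood of the invariant line.}, with $z$ treated as a slowly varying parameter, which keeps the orbit in a tube of radius shrinking with $\alpha$ around $I$ while $z$ relaxes.

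Third, I would take the turning point $p_0^u$ to be the first intersection of the perturbed orbit with $S^{+}$ after this descending regime has set in. By the rotation estimate above, the subsequent intersections with $S^{+}$ form a discrete sequence with strictly decreasing $z$-coordinates; as $z(t)$ falls below $1+\tfrac{\lambda^2}{4}$ the rotation ceases and the equilibria on $I$ become nodes, and as $z(t)$ drops below $1$ they become saddles so the expanding direction of the linearised system expels the orbit from $U_\varepsilon(I)$ in finite time.

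The main obstacle is making the singular-perturbation estimate uniform over the long $O(1/\alpha)$ time window needed for $z$ to relax from $z_u$ down below $1$; a naive Gronwall comparison loses control after $O(1)$ time. The remedy is to note that inside the tube the oscillatory forcing $\beta x^2$ decays geometrically while the slow drift of $z$ is controlled by the strict Lyapunov function from Proposition~\ref{prop:(5) layer, where constants  tends to the neighborhood of the invariant line.}; a combined quantity of the form $V(x,y)+\gamma z$ for an appropriate $\gamma>0$ can then be shown to be monotonically decreasing along the perturbed orbit after the initial transient, yielding both the tube confinement and the monotone $z$-decay simultaneously.
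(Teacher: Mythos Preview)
Your approach coincides with the paper's in spirit: both argue by perturbing from the $\alpha=0$ limit (where $W^u$ tends to $I_u$) and then invoking the local linear structure of~\eqref{eq: main system in small neighborhood} near $I$ to conclude that for small $\alpha>0$ the orbit descends along $I$, rotates in the focus region, and exits $U_\varepsilon(I)$ once $z$ reaches the saddle region. The paper's proof, however, is a three-sentence qualitative sketch; it simply asserts that $z$ decreases in $U_\varepsilon(I)$ and that $W^u$ ``remains in $U_\varepsilon$'' while moving down, without any quantitative control.

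Your proposal is therefore not a different route but a substantial strengthening of the same one: you correctly isolate the genuine analytic difficulty---maintaining tube confinement over the $O(1/\alpha)$ relaxation time, where a naive Gronwall bound fails---and propose a combined Lyapunov quantity to overcome it. The paper does not address this issue at all. One small point: the phrase ``choosing $z_0$ large enough'' is misleading, since $z_0$ is determined by the orbit and not at your disposal; what you actually need (and what the paper also tacitly assumes) is that the limit $z_u$ in~\eqref{eq:z_u} automatically exceeds $1+\lambda^2/4$, which should be argued from the dynamics rather than stipulated.
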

\begin{proof}
    For any small $\alpha>0$ the coordinate $z$ decrease along the invariant line $I$ and its neighborhood $U_{\varepsilon}(I)$ up to the saddle region.
    Hence the unstable manifold $W^u$ reaches the vicinity of the point $I_u\in U_{\varepsilon}$ and remaining in $U_{\varepsilon}$ moves down to the saddle region rotating around $z$-axis. The smaller parameter $\alpha$ the greater the number of times the curve $W^u$ rotates.
\end{proof}

\subsection{The case of $\lambda=0$}
For $\lambda=\alpha=\beta=0$, UNF~\eqref{eq: main system} becomes degenerate system having a foliation $z=\textrm{const}$ with the integral
\begin{equation*}
    H(x,y)=\frac{x^4}{4}+(z-1)\frac{x^2}{2}+\frac{y^2}{2}=\textrm{const}.
\end{equation*}
\begin{proposition}
    For $\lambda=0$, $\alpha>0$, $\beta>0$, UNF~\eqref{eq: main system} is globally unstable.  The  unstable manifold $ W^u$ goes to infinity.
\label{prop: (7) globally unstable system}
\end{proposition}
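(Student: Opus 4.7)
My plan is to track the pseudo-energy
\[
H = \frac{y^2}{2} + \frac{(z-1)x^2}{2} + \frac{x^4}{4},
\]
which is the first integral of the degenerate case $\lambda=\alpha=\beta=0$ recalled just above the proposition. A direct differentiation along UNF~\eqref{eq: main system} with $\lambda=0$ cancels the Hamiltonian cross terms and leaves
\[
\dot H = \frac{x^2}{2}(\beta x^2 - \alpha z) = \frac{x^2}{2}\dot z,
\]
so $\dot H$ and $\dot z$ share the sign of $\beta x^2-\alpha z$.

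First I would apply Lemma~\ref{lem:(1)intersection of manifolds}: along $W^u$ from the origin to its first crossing $p^u=W^u\cap S^+$, the orbit lies in the region $g^u$ where $\alpha z<\beta x^2$. Hence $\dot H>0$ strictly on this segment, and since $H(O)=0$ we reach $H(p^u)>0$. Past $p^u$, I would argue that $W^u$ cannot remain bounded. All three equilibria are unstable for $\lambda=0$: $O$ is a saddle by~\eqref{eq: eigenvalues}, and since $0<\lambda_s(\alpha,\beta)$ in~\eqref{eq: two stable equilibria} the symmetric equilibria $E^\pm$ are saddle-foci with $1$-dimensional stable manifolds, so generic transversality of $W^u$ to these $1$-D stable manifolds rules out accumulation on $E^\pm$. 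A bounded recurrent piece of $W^u$ would force the time averages $\langle\dot H\rangle=0$ and $\langle\dot z\rangle=0$, giving the two constraints $\beta\langle x^4\rangle=\alpha\langle x^2z\rangle$ and $\beta\langle x^2\rangle=\alpha\langle z\rangle$; I would combine these with coercivity of $H$ in $(x,y)$ at bounded $z$ and with $\dot z+\alpha z=\beta x^2$ to derive a contradiction.

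The main obstacle, in my view, is turning the sign-indefinite energy identity into monotone growth of $H$ on a global scale, since $W^u$ may leave the region $\alpha z<\beta x^2$ shortly after $p^u$. A plausible workaround is to isolate a positively invariant wedge contained in $\{\beta x^2\geq\alpha z\}$ that $W^u$ eventually enters, so that $\dot z\geq 0$ and $\dot H\geq 0$ thereafter; coercivity of $H$ in $(x,y)$ together with $\dot z+\alpha z=\beta x^2$ would then force $z(t)\to\infty$ and hence $(x(t),y(t),z(t))\to\infty$ along $W^u$. Global instability of UNF is then a corollary: the same argument applied to any orbit outside the measure-zero stable manifolds of the three unstable equilibria shows it cannot settle onto a compact invariant set, so no bounded attractor exists.
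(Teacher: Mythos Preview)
Your computation $\dot H=\tfrac{x^{2}}{2}\dot z$ is correct, but as you yourself concede, it is sign-indefinite once $W^{u}$ leaves the wedge $\{\beta x^{2}>\alpha z\}$, and both of your proposed remedies---a time-averaging contradiction and a forward-invariant sub-wedge of $\{\beta x^{2}\ge\alpha z\}$---are stated as hopes rather than proved. There is in fact no reason to expect such an invariant wedge: orbits do cross the parabola $\beta x^{2}=\alpha z$ in both directions, and the averaging constraints $\beta\langle x^{4}\rangle=\alpha\langle x^{2}z\rangle$, $\beta\langle x^{2}\rangle=\alpha\langle z\rangle$ are not by themselves inconsistent with a bounded recurrent orbit. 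So the proof has a genuine gap at the key step.

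You are, however, one algebraic line away from the paper's argument. Substituting $x^{2}=\tfrac{1}{\beta}(\dot z+\alpha z)$ into your identity gives
\[
\dot H=\frac{\dot z^{2}}{2\beta}+\frac{\alpha}{2\beta}\,z\dot z
=\frac{\dot z^{2}}{2\beta}+\frac{d}{dt}\!\left(\frac{\alpha z^{2}}{4\beta}\right),
\]
so the corrected function $V=H-\dfrac{\alpha}{4\beta}z^{2}$ satisfies $\dot V=\dfrac{\dot z^{2}}{2\beta}\ge 0$ when $\lambda=0$ (and $\dot V=-\lambda y^{2}+\dot z^{2}/2\beta$ in general). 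Now $V$ is genuinely monotone, strictly increasing off $\{\beta x^{2}=\alpha z\}$, whose largest invariant subset is just $\{O,E^{\pm}\}$. Since $V(E^{\pm})=-\dfrac{\alpha}{4(\alpha+\beta)}<0=V(O)$ while $V$ is non-decreasing along $W^{u}$, the unstable manifold cannot accumulate on any equilibrium and must go to infinity; the same LaSalle-type reasoning excludes any bounded orbit outside the stable manifolds of $O$ and $E^{\pm}$.
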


\begin{proof}
    Consider the Lyapunov function 
    \begin{equation*}
        V=H(x,y)-\frac{\alpha}{4\beta}z^2,
    \end{equation*}
    which derivative with respect to UNF~\eqref{eq: main system} reads
    \begin{equation*}
        \dot{V}=-\lambda y^2 + \frac{\dot{z}^2}{2\beta}.
    \end{equation*}
For $\lambda=0,\ \beta>0, \ -\alpha z+\beta x^2 \neq 0,$ we have inequality $\dot{V}>0$ which implies that for $t\to \infty$  the only bounded orbits are that which lie in 2D stable manifold $W^s$ of the saddle $O$ and $1D$ stable manifolds of the saddle-foci $E^{\pm}$. All the other orbits including unstable manifold $W^u$ tends to infinity.
\end{proof}


\subsection{The case of large $\lambda$}
For large damping $\lambda\gg 1$ we introduce new parameter and rescaling
\begin{equation}
    \mu = \lambda^{-2},\quad \lambda y \to y, \quad \lambda t \to t.
    \label{eq:rescaling_lambda}
\end{equation}
 Then UNF~\eqref{eq: main system} takes the form of slow-fast system
\begin{equation*}
\begin{array}{l}
        \dot{x}=\mu y,\\
        \dot{y}=-(x^2+z-1)x-y,\\
        \dot{z}=\sqrt{\mu} (-\alpha z+\beta x^2).
\end{array}
\end{equation*}
For small $\mu$ this system has stable slow manifold 
\begin{equation*}
    y=-(x^2+z-1)x+\dots,
\end{equation*}
and the system of equation on it has the form 
\begin{equation*}
    \begin{array}{l}
        \dot{x}=-\mu(x^2+z-1)x+\ldots,\\
         \dot{z}=\sqrt{\mu} (-\alpha z+\beta x^2).
    \end{array}
\end{equation*}
The saddle $O(0,0,0)$ and two stable foci $E^{\pm}$ are the only non wandering set of this system such that two symmetric unstable manifold $W^s$ of the saddle $O$ are twisted onto foci $E^{\pm}$ [see Fig.~\ref{fig:Cases_3_4}(a)].

\begin{figure*}
    \centering
    (a)\includegraphics[width=0.35\linewidth]{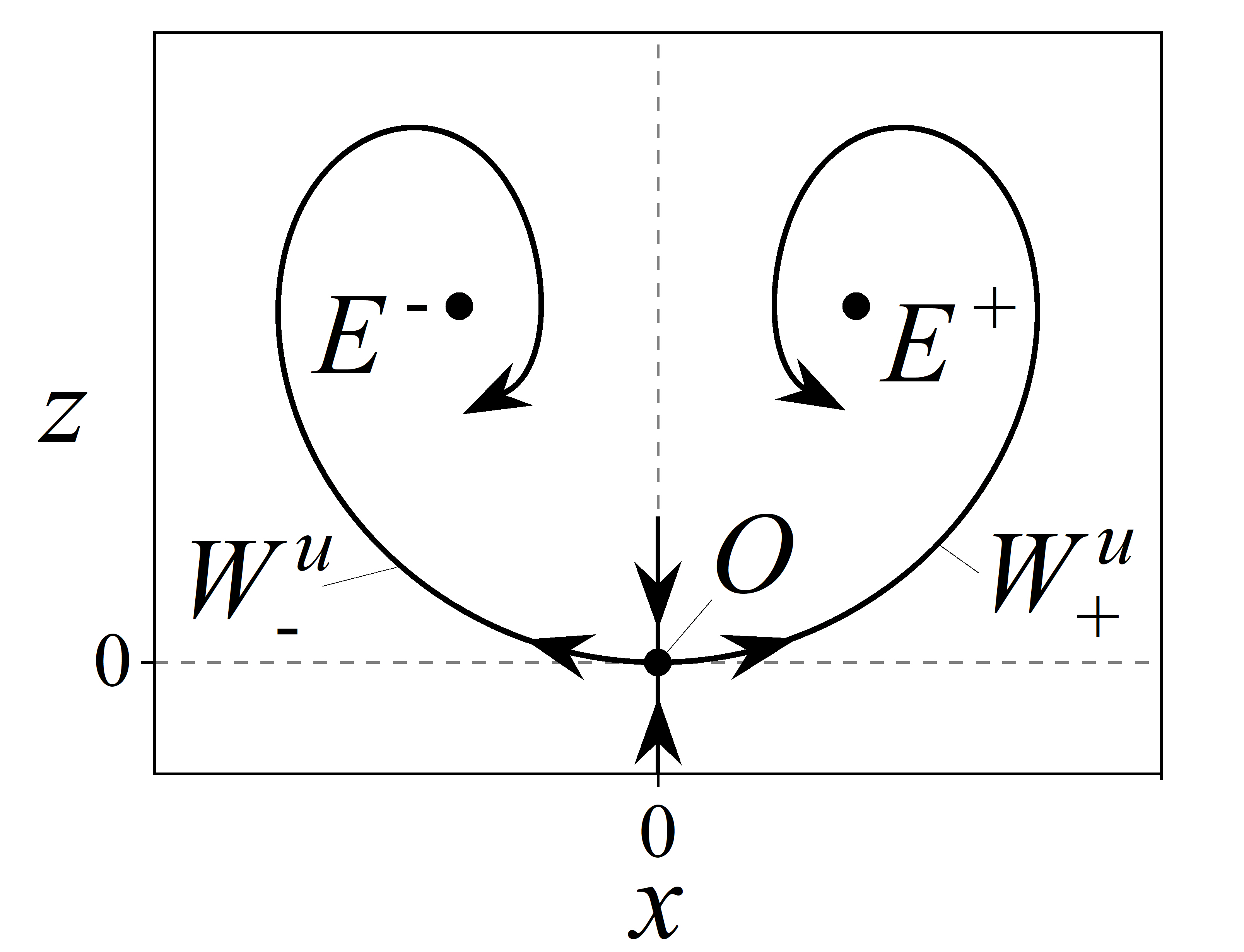}    (b)\includegraphics[width=0.35\linewidth]{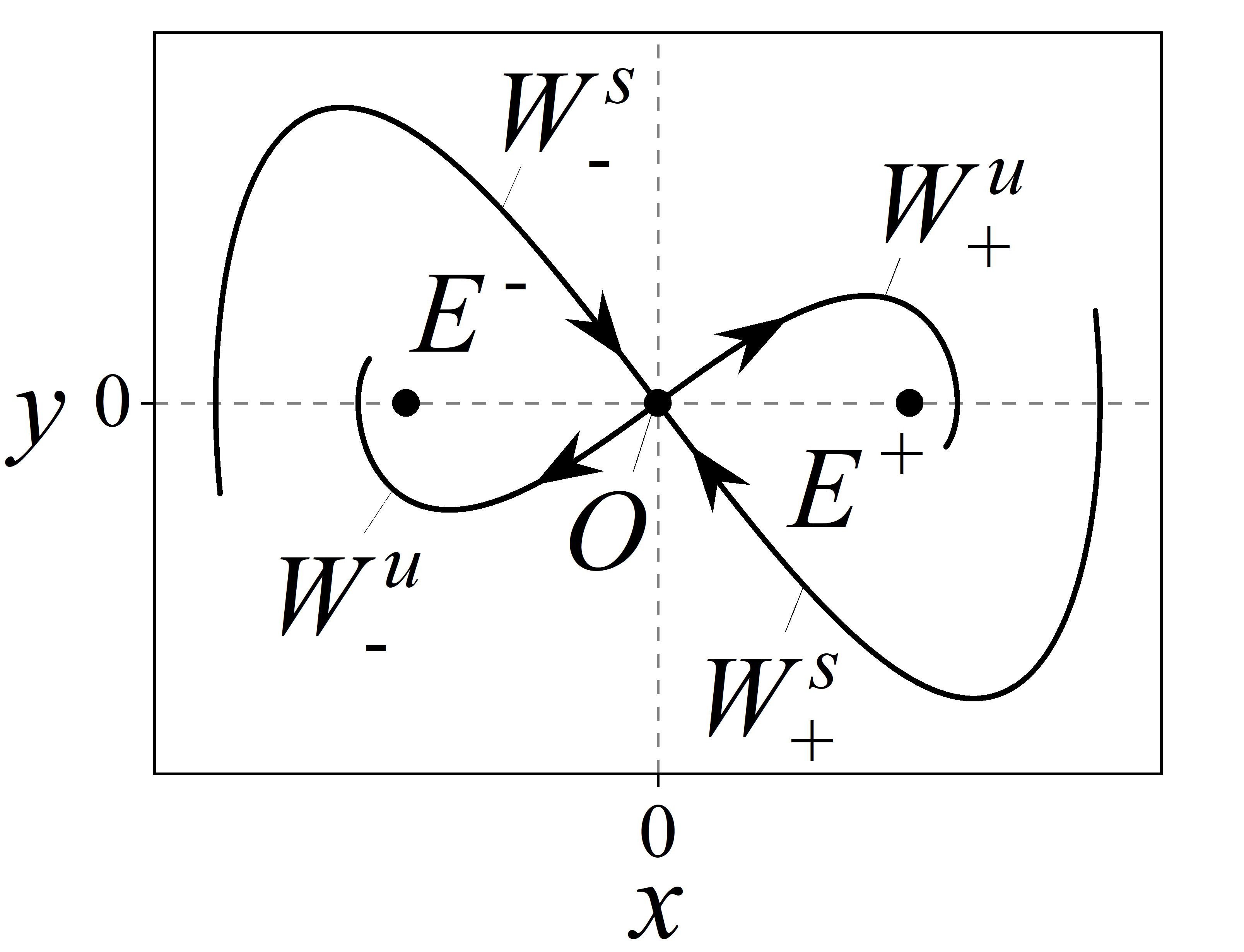}
    \caption{Phase pictures for the cases of large $\lambda$ [panel (a)] and large $\alpha$ [panel (b)].}
    \label{fig:Cases_3_4}
    \vspace*{12pt}
\end{figure*}

\subsection{The case of large $\alpha$}
For large $\alpha\gg1 $ similarly to~\eqref{eq:rescaling_lambda} we introduce small parameter and time rescaling
\begin{equation*}
    \mu=\frac{1}{\alpha},\quad \alpha t \to t.
\end{equation*}

Then UNF~\eqref{eq: main system} takes the from of  another slow-fast system
\begin{equation*}
\begin{array}{l}
     \dot{x}=\mu y, \\ 
     \dot{y}=-\mu \left((x^2+z-1)x-\lambda y\right), \\
     \dot{z}=-z+\mu\beta x^2.
\end{array}
\end{equation*}

For small $\mu$ this system has stable slow manifold $z=0$ dynamics on which is given by the system
\begin{equation}
    \dot{x}=\mu y,\quad \dot{y}=-\mu \left( (x^2-1)x-\lambda y\right),
    \label{eq: dynamics on which is given by the system}
\end{equation}
which is globally stable such that the unstable manifold $W^u$ of the saddle $O$ tends to the stable equilibria $E^{\pm}$ [see Fig.~\ref{fig:Cases_3_4}(b)]. 

\subsection{The case of large $\beta$}
For limiting case of large $\beta\gg 1$, we introduce the following new variables and parameters 
\begin{equation*}
    \sqrt{\frac{\beta}{\alpha}}x \to\ x,\quad \sqrt{\frac{\beta}{\alpha}}y \to y, \quad \mu=\frac{\alpha}{\beta}.
\end{equation*}
Then instead of UNF~\eqref{eq: main system} we obtain the system 
\begin{equation}
    \begin{array}{l}
    \dot{x}=y,\\
    \dot{y}=-(\mu x^2 + z-1)x-\lambda y,\\
         \dot{z}=\alpha(-z+x^2).
    \end{array}
    \label{system 2}
\end{equation}
UNF~\eqref{eq: main system} in this form was used in many papers (see \cite{shil1997homoclinic}, etc).
In particular, for $\beta\to \infty$ when $\mu\to 0$, the system~\eqref{system 2} becomes the well-known Shimizu-Morioka system \cite{shimizu1980bifurcation}.

\section{Homoclinic bifurcations and attractors}

\subsection{Bifurcations of homoclinic orbits
\label{sec:Bifurcations of homoclinic orbits}}

Lemmas~\ref{lem:(1)intersection of manifolds} and \ref{lem:(2)manifold rotation around the invariant line} define the first intersections $p^u=W^u\cap S^{+}$ and $l^s_{+}=W^s\cap S^{+}$  for any point of nonnegative parameters $(\alpha,\beta,\lambda)$. 

We say that the bifurcation of homoclinic orbit of the saddle $O$ is \textit{principal} if $p^u\in l^s_{+}$.

Using~\eqref{eq:intersects the cross-section in a point},~\eqref{symmetrical system} and~\eqref{eq:x^ssatisfies the condition} we introduce the function
\begin{equation*}
    \triangle_k(z^u)=x^u-x^s(z^u),\ \ z^u\in (z_k,z_{k+1}),
\end{equation*}
characterizing the mutual arrangement of the intersections $p^u$ and $l^s_{+}$ such that 
\begin{equation*}
\begin{array}{ll}
     \triangle_k=0 & \textrm{ for}\quad p^u\in l^s_{+},\\
      \triangle_k<0\;\big(>0\big) & \textrm{ for}\quad p^u\in b_k\quad\big(p^u\notin b_k, \textrm{ resp.}\big),\\
\end{array}
\end{equation*}
for $k=0,1,2,\ldots\,$. 
If the function $\triangle_k$ continuously depends on the parameters, the homoclinic bifurcation can be fined from the equation $\triangle_k=0$.
\begin{theorem}
For each point of positive parameters $(\alpha,\beta)$, 
    there exists a bifurcational surface $\lambda=\lambda_0 (\alpha,\beta)$ satisfying the conditions
    \begin{equation}
        \lambda_0 (0,0)=0,\quad \lambda_0(\alpha,\beta)>0,
    \label{eq: (71)  satisfying the  condition}
    \end{equation}
and corresponding to the existence of two symmetric homoclinic orbits of the saddle $O$ of UNF~\eqref{eq: main system} in the region $0<z<z_1$, where $z_1$ is defined in Proposition~\ref{prop:(5) layer, where constants  tends to the neighborhood of the invariant line.}. 
\label{th: (1) bifurcational surface}
\end{theorem}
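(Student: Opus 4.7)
The plan is to apply the intermediate-value theorem to the gap function
\[
\Delta_0(\lambda;\alpha,\beta) \;=\; x^u(\lambda;\alpha,\beta)\;-\;x^s\!\bigl(z^u(\lambda;\alpha,\beta);\,\lambda,\alpha,\beta\bigr)
\]
introduced just before the theorem; a zero of $\Delta_0$ with the corresponding $z^u\in(0,z_1)$ is exactly the principal homoclinic bifurcation of $O$ in the saddle-and-node skirt $b_0$. The involution $(x,y,z)\mapsto(-x,-y,z)$ of UNF~\eqref{eq: main system} then produces the symmetric companion for free, so it suffices to locate a single zero of $\Delta_0$ on $S^+$.

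First I would establish continuity of $\Delta_0$. Lemma~\ref{lem:(1)intersection of manifolds} makes $(x^u,z^u)$ the transversal first intersection of $W^u$ with $S^+$, and Lemma~\ref{lem:(2)manifold rotation around the invariant line} makes $x=x^s(z)$ a continuous branch on the first skirt piece of $l^s_+$. Combined with standard continuous dependence of ODE trajectories on the parameters $(\alpha,\beta,\lambda)$ and transversality of the flow to $\{y=0\}$ at the relevant return points ($\dot y\neq 0$ there), this gives continuity of $\Delta_0$ in $(\lambda,\alpha,\beta)$ throughout the range where $z^u\in(0,z_1)$.

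Next I would exhibit two values of $\lambda$ producing opposite signs of $\Delta_0$. For \emph{large} $\lambda$, the Lyapunov function $V=x^4/4-x^2/2+y^2/2$ of Lemma~\ref{lem:(1)intersection of manifolds} satisfies $\dot V=-yz-\lambda y^2$; along the arc of $W^u$ from $O$ to $p^u$ one has $yz\geq 0$ (since $W^u\subset g^u$), so
\[
V(p^u)\;\leq\;-\lambda\!\int_0^{T^u}\! y^2\,dt,
\]
with the integral bounded below by a positive constant that stays uniform for large $\lambda$ (controlled via the slow-fast rescaling of Section~4.3). Because $V(p^u)=(x^u)^4/4-(x^u)^2/2$ and $0<x^u<\sqrt{2}$, this forces $x^u\to 0$ as $\lambda\to\infty$, while Proposition~\ref{prop:(4) The x-coordinates of the intersection are bounded.} keeps $x^s(z^u)$ bounded away from zero; hence $\Delta_0<0$ for all sufficiently large $\lambda$, i.e.\ $p^u\in b_0$. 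For \emph{small} $\lambda$, Proposition~\ref{prop: (7) globally unstable system} asserts that $W^u$ escapes to infinity at $\lambda=0$, so $p^u\notin l^s_+$; using the strict monotonicity of the Lyapunov function of Proposition~\ref{prop: (7) globally unstable system} along $W^u$ together with the ruled-helical description~\eqref{tangent manifold} of $W^s_t$ traps $W^u$ on the outer side of $l^s_+$ and yields $\Delta_0(0;\alpha,\beta)>0$, hence $\Delta_0>0$ for $\lambda$ near $0$ by continuity. The intermediate-value theorem now produces $\lambda_0(\alpha,\beta)\in(0,\infty)$ with $\Delta_0(\lambda_0;\alpha,\beta)=0$ and $z^u\in(0,z_1)$, which is the desired homoclinic.

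The boundary condition $\lambda_0(0,0)=0$ follows because at $\alpha=\beta=0$ the equation $\dot z=0$ reduces UNF on the invariant plane $\{z=0\}$ to the 2D system~\eqref{2D auxiliary system}, whose conservative version ($\lambda=0$) possesses the classical pair of symmetric homoclinics at the level $V=0$. The main obstacle I anticipate is the small-$\lambda$ side: while Proposition~\ref{prop: (7) globally unstable system} rules out a homoclinic at $\lambda=0$ and therefore $\Delta_0(0;\alpha,\beta)\neq 0$, turning this into the definite sign $\Delta_0>0$ demands a careful geometric comparison of the escaping $W^u$ with the skirt boundary $l^s_+$, which I would handle by exploiting the ruled-helical geometry of $W^s_t$ developed in \S\ref{subsec:Focus piece of $W^s_{t}$} to show that the outgoing trajectory necessarily exits $S^+$ on the outer side of $l^s_+$.
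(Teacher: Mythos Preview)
Your overall strategy---apply the intermediate-value theorem to $\Delta_0$ by contrasting large $\lambda$ with small $\lambda$---is exactly the paper's. The small-$\lambda$ side and the boundary value $\lambda_0(0,0)=0$ are treated in the same spirit as the paper (which is equally terse there), and you correctly flag the sign determination at $\lambda=0$ as the delicate point.

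The large-$\lambda$ step, however, contains a real error. Your dissipation estimate would give $V(p^u)\leq -\lambda\int_0^{T^u}y^2\,dt$, and you assert the integral has a uniform positive lower bound. It does not: in the slow--fast regime of Section~4.3 one has $y=O(\lambda^{-1})$ while $T^u=O(\lambda)$, so $\int y^2\,dt=O(\lambda^{-1})$ and the product $\lambda\int y^2\,dt$ stays bounded. Indeed $V(p^u)=(x^u)^4/4-(x^u)^2/2\geq -1/4$ cannot tend to $-\infty$, and in fact $p^u\to E^+$, so $x^u\to\sqrt{\alpha/(\alpha+\beta)}>0$, not to $0$. Separately, Proposition~\ref{prop:(4) The x-coordinates of the intersection are bounded.} gives only the \emph{upper} bound $|x|<x_0^s$ for $W^s\cap S$; it says nothing about $x^s(z^u)$ being bounded away from zero, and in fact $x^s$ vanishes at each $z(\tau_k)$.

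The paper's argument at large $\lambda$ avoids all of this: it simply invokes the global stability established in Section~4.3, so $W^u$ spirals onto the stable focus $E^+$, which sits at height $z=\beta/(\alpha+\beta)<1<z_1$ inside the skirt $b_0$. Hence the first return $p^u$ lies in $b_0$ and $\Delta_0<0$ by definition. Replace your quantitative estimate by this qualitative observation and your proof aligns with the paper's.
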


\begin{proof}
    For large $\lambda\gg 1$, UNF~\eqref{eq: main system} is globally stable [see Fig.~\ref{fig:Cases_3_4}(a)] and the location of $p^u$ and $l^s_{+}$ is such that $p^u \in b_0$, i.e. $\triangle_0<0$ in this case.
    On the other hand due to Proposition~\ref{prop: (7) globally unstable system} for small $\lambda\ll 1$ UNF~\eqref{eq: main system} is globally unstable and hence $p^u\notin b_0$, i.e. $\triangle_0>0$ in this case.
    Hence, there exists a value $\lambda=\lambda_0(\alpha,\beta)$ satisfying~\eqref{eq: (71)  satisfying the  condition} for which $\triangle_0=0$ and UNF~\eqref{eq: main system} has the homoclinic orbit of the saddle $O$  in the region $x>0,$ and the symmetrical one in the region $x<0$ [see Fig.~\ref{fig:Oriented_non_oriented}(a)].
\end{proof}

\begin{figure*}
    \centering         (a)\includegraphics[width=0.35\linewidth]{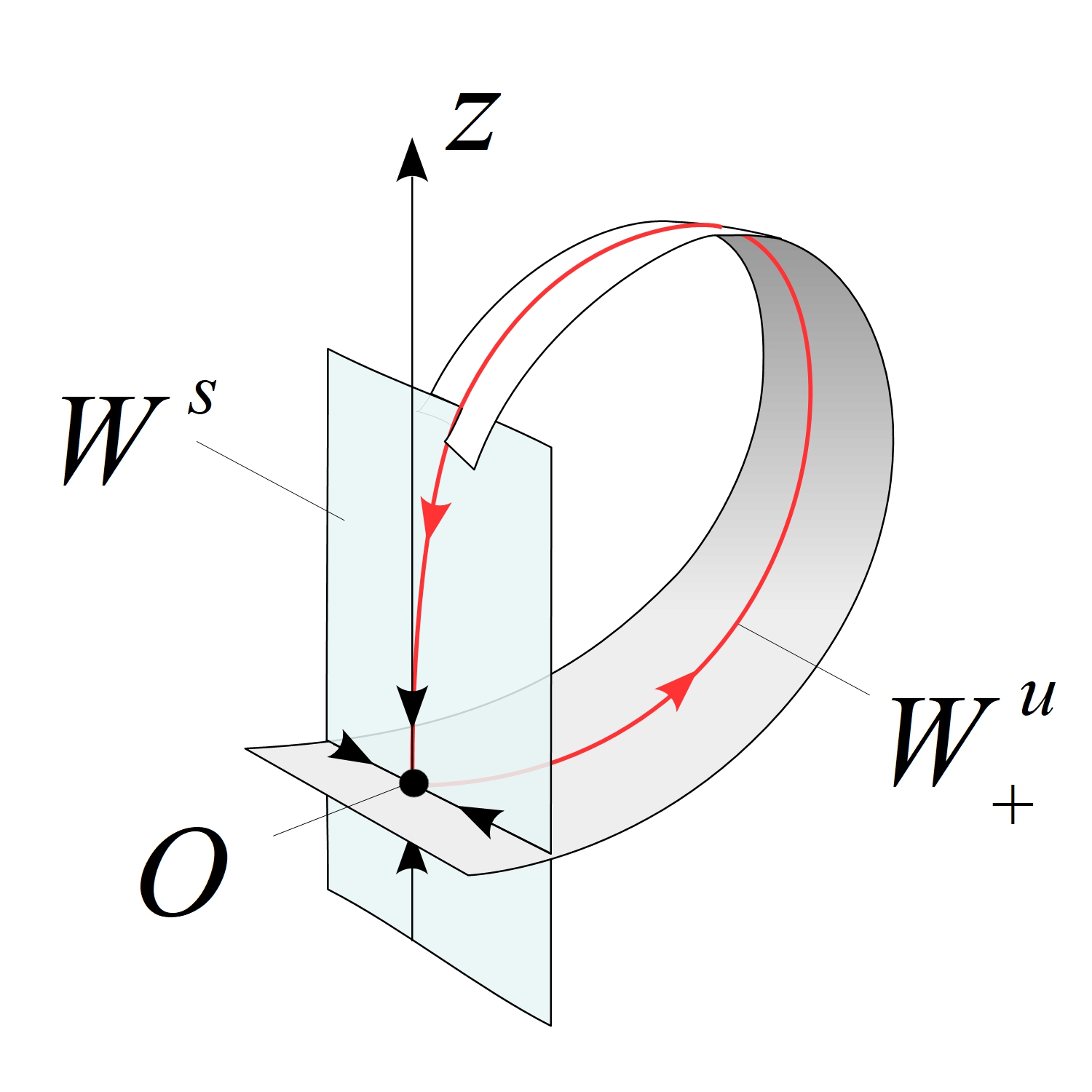}  (b)\includegraphics[width=0.35\linewidth]{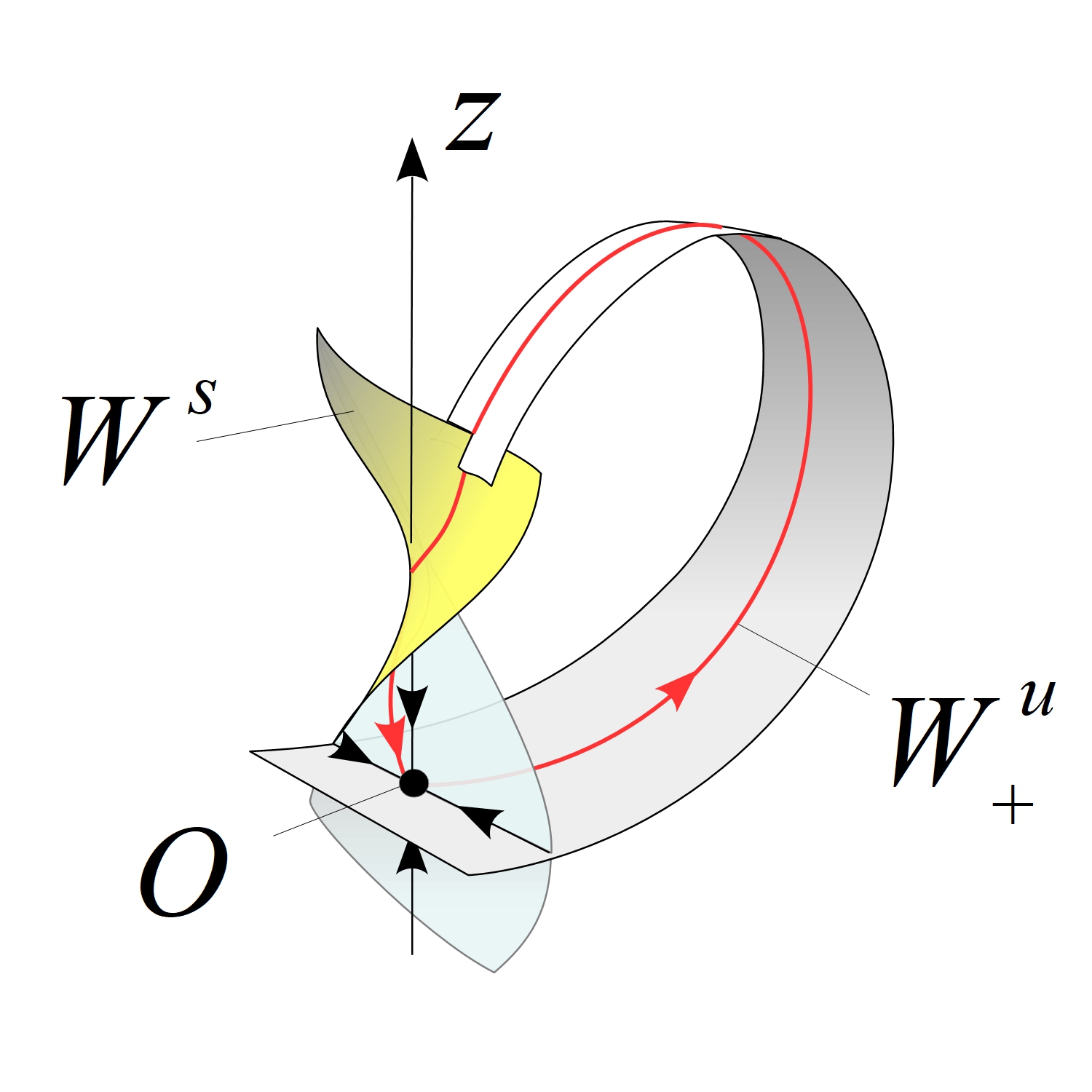}
    \caption{Qualitative pictures of topologically different homoclinic orbits. (a) Oriented case: the unstable manifold $W_+^u$ hits stable manifold $W^s$ at the region $b_0$. (b) Non-oriented case: $W_+^u$ hits $W^s$ at the region $b_1$.}
\label{fig:Oriented_non_oriented}
    \vspace*{12pt}
\end{figure*}

\begin{theorem} For bounded $\lambda>0$ and $\beta>0$,
there exists an infinite set of bifurcational surfaces $\alpha=\alpha_k(\lambda,\beta)$, $k=1, 2,\ldots$, corresponding to the existence of two symmetric twisted homoclinic orbits of the saddle $O$ of UNF~\eqref{eq: main system}. Each of them makes $m$ rotations around the invariant line $I$ for $k=2m$, $m=1,2,\ldots$, and makes $m$ and a half turns for $k=2m+1$, $m=0,1,\ldots$.
\label{th: (2) set of bifurcational surfaces}
\end{theorem}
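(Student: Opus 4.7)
The plan is to mimic the argument of Theorem~\ref{th: (1) bifurcational surface}, combining the bubble decomposition of Corollary~\ref{cor: 5.1} with the rotational behavior of $W^u$ established in Proposition~\ref{prop:(6) small alfa}. I fix $\lambda>0$ and $\beta>0$ and treat $\alpha$ as the control parameter. For each domain $b_k$ I would introduce
\begin{equation*}
    \triangle_k(\alpha)=x^u(\alpha)-x^s\bigl(z^u(\alpha)\bigr),\qquad z^u(\alpha)\in(z_k,z_{k+1}),
\end{equation*}
so that the vanishing of $\triangle_k$ detects $p^u\in l^s_+$ on the boundary of $b_k$, while its sign encodes which side of $l^s_+$ contains $p^u$.

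First I would pin down one end of the $\alpha$-range. For $\alpha\gg 1$ the analysis of Section~\ref{sec:Special cases} reduces UNF~\eqref{eq: main system} to the planar system~\eqref{eq: dynamics on which is given by the system} on the slow manifold $z=0$, so $W^u$ tends directly to $E^+$ without encircling $I$, forcing $p^u$ to lie in $b_0$. Combined with the continuous dependence of $p^u$ on parameters, this fixes the starting position of the intersection point on $S^+$.

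Next I would let $\alpha$ decrease. Proposition~\ref{prop:(6) small alfa} asserts that, as $\alpha\to 0^+$, after reaching the vicinity of the turning point $I_u$, the orbit $W^u$ descends along $U_\varepsilon(I)$ with a number of rotations about $I$ that grows without bound. Continuous dependence of $p^u$ on $\alpha$ then forces it to sweep successively through the bubbles $b_0,b_1,b_2,\ldots\,$; between each pair of adjacent bubbles the function $\triangle_k$ must pass through zero, producing a value $\alpha=\alpha_k(\lambda,\beta)$ with $p^u\in l^s_+$. The involution $(x,y,z)\mapsto(-x,-y,z)$ then delivers the symmetric twin orbit on $x<0$.

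The rotation count is read off from Lemma~\ref{lem:(2)manifold rotation around the invariant line}: the curve $l^s_+$ is generated in reverse time by the straight line~\eqref{tangent manifold} whose slope $K(t)$ alternates sign at the sequence $\tau_1,\tau_2,\ldots\,$. Hence the crossing into $b_{2m}$ corresponds to $W^u$ completing $m$ full turns around $I$, while the crossing into $b_{2m+1}$ corresponds to $m$ and a half turns, matching the parity in the statement. The step I expect to be the main obstacle is the \emph{monotonicity} claim implicit in the sweep: proving that $p^u$ actually passes through every $b_k$ in order as $\alpha$ decreases, rather than oscillating between bubbles and possibly skipping some. Controlling this requires balancing the $z$-decay rate (set by $\alpha$) against the transverse rotation rate of the flow inside $U_\varepsilon(I)$, together with the transversality~\eqref{eq:xi<0} of the flow to the tangent manifold $W^s_t$, to guarantee that the successive intersections of $W^u$ with $l^s_+$ are isolated and generic in $\alpha$.
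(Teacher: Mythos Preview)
Your proposal follows essentially the same route as the paper: an intermediate-value argument in $\alpha$, with the two ends of the $\alpha$-range supplied by Proposition~\ref{prop:(6) small alfa} (small $\alpha$) and the slow-manifold reduction of Section~\ref{sec:Special cases} (large $\alpha$), and the parity of $k$ read off from Lemma~\ref{lem:(2)manifold rotation around the invariant line}.

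Two points of departure are worth noting. First, the mechanism you invoke to place $p^u$ in an arbitrary bubble $b_k$ is not the one the paper uses. The rotations of $W^u$ \emph{after} the turning point $p^u_0$ (the descending leg inside $U_\varepsilon(I)$) do not determine which bubble contains the first-return point; what matters is the height $z^u$ of $p^u_0$ relative to the bubble boundaries $z(\tau_k)$. The paper's argument is that the $z(\tau_k)$ come from the linearized system~\eqref{eq: main system in small neighborhood} and are therefore independent of $\beta$, whereas $z_u$ in~\eqref{eq:z_u} (and hence the height of $p^u_0$) grows with $\beta$; this is what pushes $p^u_0$ into every $b_k$. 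Second, the obstruction the paper highlights is not monotonicity of the sweep but the possible \emph{discontinuity} of $\triangle_k$: when $W^u$ falls onto the stable manifold of $E^\pm$ or of a saddle periodic orbit, the first-return point $p^u$ disappears and the intermediate-value step breaks down. The paper handles this by declaring that in such a case the surface $\alpha=\alpha_k(\lambda,\beta)$ is a heteroclinic rather than a homoclinic bifurcation surface. Your monotonicity concern is legitimate, but the paper neither raises nor resolves it.
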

\begin{proof}
    Due to Lemma~\ref{lem:(1)intersection of manifolds} the invariant line $I$ crosses all the domains $b_k$, $k=1,2,\dots$, having the 
    form~\eqref{eq: sequence of domains} for any $\alpha>0.$ Then according to the Proposition~\ref{prop:(6) small alfa} for small $\alpha$ the turning point $p^u_0$ falls 
    into one of the regions $b_k$, and $p^u_0\in b_k$.
    On the other hand for large $\alpha\gg 1$ the arrangement of the manifold $W^u$ and $W^s$ is defined by the globally stable system~\eqref{eq: dynamics on which is given by the system} [see  Fig.~\ref{fig:Cases_3_4}(b)]. 
    Therefore for $\alpha\gg 1$ we have $p_0^u\notin b_k$. Hence, the  function $\triangle_k$  has different signs for $\alpha\ll 1$ and $\alpha\gg 1.$ Then if the function $\triangle_k$ continuously depends on $\alpha$ there exists a value $\alpha_k(\lambda,\beta)$ such that $\triangle_k\big|_{\alpha_k}=0$ corresponding to a homoclinic orbit of the saddle $O$ having $m$ scrolls around $I$.
    
    From~\eqref{eq:z_u} and $\dot{z}\approx \beta x^2$ for $\alpha\ll 1$ it follows that the value $z_u$ increases with increasing $\beta$. Therefore $z$-coordinate of the $p^u_0$ also increases since the point $p^u_0$ is close to the point $(0,0,z_u)$ for infinitesimal $\alpha$.
    On the other hand, the coordinates $z(\tau_k)\in I$ of the infinite sequence of domains $b_k,\ k=1,2,\dots$ do not depend on the parameter $\beta$.
    Then with increasing $\beta$ for small $\alpha$, point $p^u_0$ falls in all domains $b_k$, $k=1,2,\ldots$.
    This implies that the number of homoclinic orbits is infinite if the functions $\triangle_k,\ k=1,2,\dots$ are continuous [see Fig.~\ref{fig:Oriented_non_oriented}(b) for the first non-oriented case $k=1$]. 
    The matter is that the function $\triangle_k$ is discontinuous when the manifold $W^u$ becomes heteroclinic orbit, i.e. $W^u$ hits the stable manifold of either equilibria $E^{\pm}$ or a saddle periodic orbit.
    In this case the point $p^u_0$ disappears and the function $\alpha=\alpha_k(\lambda,\beta)$ becomes a heteroclinic bifurcational surface. 
\end{proof}

\begin{remark}
    Leonov's ``fishing principle'' \cite{leonov2012general,leonov2015differences} should be corrected because the point $p^u$ of the first intersection of unstable 1D manifold $W^u$ with its cross-section $S$ (``pond'') in Leonov's case can disappear via a heteroclinic bifurcation which can arise when $W^u$ hits the stable manifold either of a saddle cycle or an equilibrium.
    The possibility of such a scenario was taken into account in the paper \cite{belykh2000homoclinic}, Theorem 3.
\end{remark}

\subsection{Lorenz-like attractors of UNF
\label{sec:Lorenz-like attractors of UNF}}

In recent papers \cite{belykh2019lorenz,barabash2025analytically} we proved the existence of the set of principal bifurcations of birth, transformation and death of Lorenz-type attractor in a piecewise-smooth system. This set is similar to well known scenario of the original Lorenz system~\eqref{eq: generalized Lorenz} for $q=1$ and positive parameters $a$, $b$ and $r$.

In the case of UNF~\eqref{eq: main system} the homoclinic bifurcation $\lambda=\lambda_0(\alpha,\beta)$ occurs in the saddle region at the transition from $p^u\in b_0$ to $p^u\notin b_0$. 
Hence this bifurcation is similar to that of original Lorenz system but now it is related both to Lorenz and Chen zones, particularly, to all systems listed in Proposition~\ref{prop:(1)The one-to-one map}.
From the Theorem~\ref{th: (1) bifurcational surface} it follows that 2D saddle manifold $W^{su}$ of the saddle $O$ based on the eigenvectors for $e_2$ and $e_3$ in~\eqref{eq: eigenvalues} forms a surface close to an annulus in a neighborhood of homoclinic orbit [see Fig.~\ref{fig:Oriented_non_oriented}(a)].
Then the homoclinic orbit $W^{su}\cap W^s$ is oriented one. The bifurcational scenario in this case is as follows [see  the bifurcation route (b)-(e) and the corresponding phase pictures in Fig.~\ref{fig:UNF_bif_diag_Lorenz}].

Under Shilnikov condition~\eqref{eq: Shilnikov's condition} Cantor set  of orbits appears after the first principal bifurcation of two homoclinic orbits to the saddle $O$ [see Fig.~\ref{fig:UNF_bif_diag_Lorenz}(c)].
This set becomes the Lorenz strange attractor  after the second main bifurcation of two heteroclinic orbits which connect (asymptotically) the saddle and two symmetrical saddle cycles [see Fig.~\ref{fig:UNF_bif_diag_Lorenz}(d)]. 
The stable manifolds of these cycles bound the basins of two stable equilibria. When the cycle merge the equilibria at Andronov-Hopf bifurcation the strange attractor becomes the unique attractor of the Lorenz system [see Fig.~\ref{fig:UNF_bif_diag_Lorenz}(e)].
The structure of this  attractor changes  at an infinite set of \textit{internal} bifurcations of homoclinic orbits and therefore is called singular hyperbolic one.

We consider the transition ``homoclinic orbit'' $\to$ ``Cantor set'' $\to$ ``strange attractor'' as the general scenario of the birth of different non-conjugate chaotic attractors. 
This is directly related to the set of bifurcations $\alpha=\alpha_k(\lambda,\beta)$. 

Finally, we should present a simple case of negative saddle value for $\lambda_0(\alpha,\beta)>\frac{1-\alpha^2}{\alpha}$. Under these conditions two symmetric stable limit cycles disappear at the homoclinic bifurcation.
These cycle were born before at Andronov-Hopf bifurcation of equilibria $E^{\pm}$.

\subsection{Chen-like attractors of UNF
\label{sec: Chen-like attractors of UNF}}

According to the Theorem~\ref{th: (2) set of bifurcational surfaces} homoclinic bifurcations 
$\alpha=\alpha_k(\lambda,\beta)$ occur in the focus region at the transition from $p^u\in b_k$ to $p^u\notin b_k$. 
The twisted surface $W^s$ makes $m$ turns around $I$ for $b_{2m}$, $m=1,2,\dots$, and makes $m$ and a half turns around $I$ for $b_{2m+1}$, $m=0,1,\dots$.
Hence the homoclinic orbit $W^s\cap W^{su}$  rotates $m$ times around $I$ moving from point $p^u_0$ to the saddle $O$ [see Fig.~\ref{fig:Chen_struct} and Fig.~\ref{fig:UNF_bif_diag_Chen}(e)]. 
Homoclinic orbit for odd $k=2m+1$ is non oriented since the surface $W^{su}$ is close to closed single-sided surface, Mobius strip for $m=0$ [see Fig.~\ref{fig:Oriented_non_oriented}(b) for $\alpha=\alpha_1(\lambda,\beta)$, $m=0$].
Whereas for even $k=2m$ the surface $W^{su}$ is twisted but is close to oriented two-sided surface.

\begin{figure*}
    \centering   \includegraphics[width=0.35\linewidth]{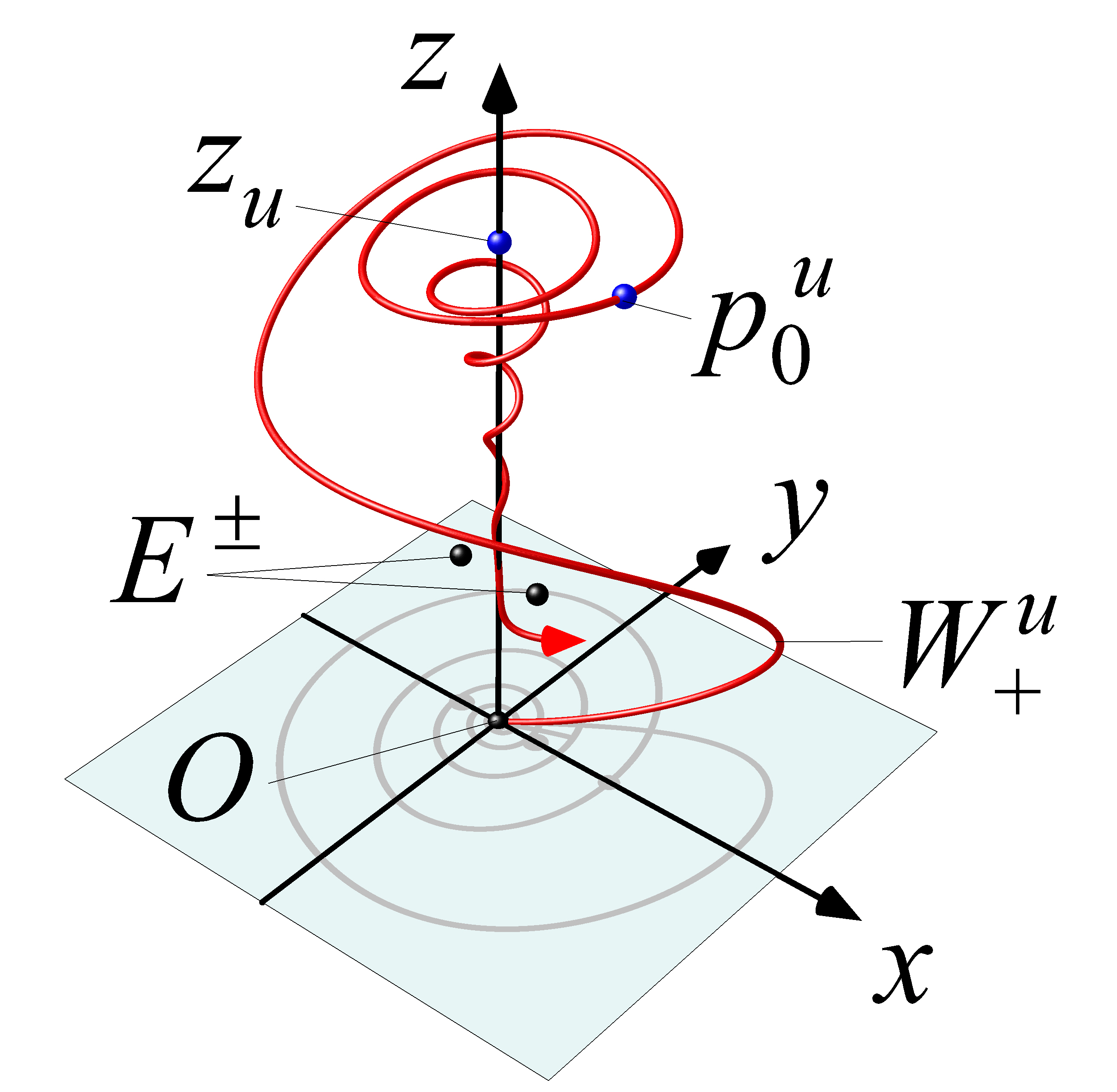}
    \smallskip
    \caption{Rotating unstable separatrix $W_+^u$ of the saddle $O$ in the vicinity of a twisted homoclinic orbit for $\alpha=0.04$, $\beta=2.47$, $\lambda=0.3$. The parameter values are selected from the region of Chen-like systems [see Fig.~\ref{fig:UNF_bif_diag_Chen}(a)].}
    \label{fig:Chen_struct}
    \vspace*{12pt}
\end{figure*}

It is well know that the Lorenz-like attractors are characterized by a sequence of two random numbers. That is the number $l$ of rotations around the left equilibrium point $E^{-}$ and the number $r$- around the right point $E^{+}$.
Chen-like attractors are characterized by a random sequence of three numbers. 
Two of them $l$ and $r$ are similar to the Lorenz case and the third one $m$ is the number of rotations around the invariant line $I$. 
Moreover, both these three numbers and the order in which they appear in the sequence are random.
Thus we get the rigorous confirmation of principal difference between Lorenz-like and Chen-like attractors established numerically via GCM approach in the paper \cite{yu2007analysis} and well illustrated in the paper \cite{wang2013gallery}.

In general, under the Shilnikov condition~\eqref{eq: Shilnikov's condition} at any homoclinic bifurcation $\alpha=\alpha_k(\lambda,\beta)$, $k=0,1,\dots$, there appears a chaotic non attracting Cantor set of orbits having different structure for odd and even $m$.
This set as was noted similarly to original Lorenz case may become a strange attractor. 
In the case when heteroclinic bifurcation occurs  (instead of homoclinic) the bifurcational scenario can be more complicated.

Note that the Cantor sets of orbits appearing at the homoclinic bifurcations preserves the topological type of homoclinic orbits defined by the number of rotations  around invariant line $I$. When the Cantor set transform into a strange attractor then these rotations preserves as well.

\begin{figure*}
 \centering
 \begin{multicols}{2}
(a)\includegraphics[width=1\linewidth]{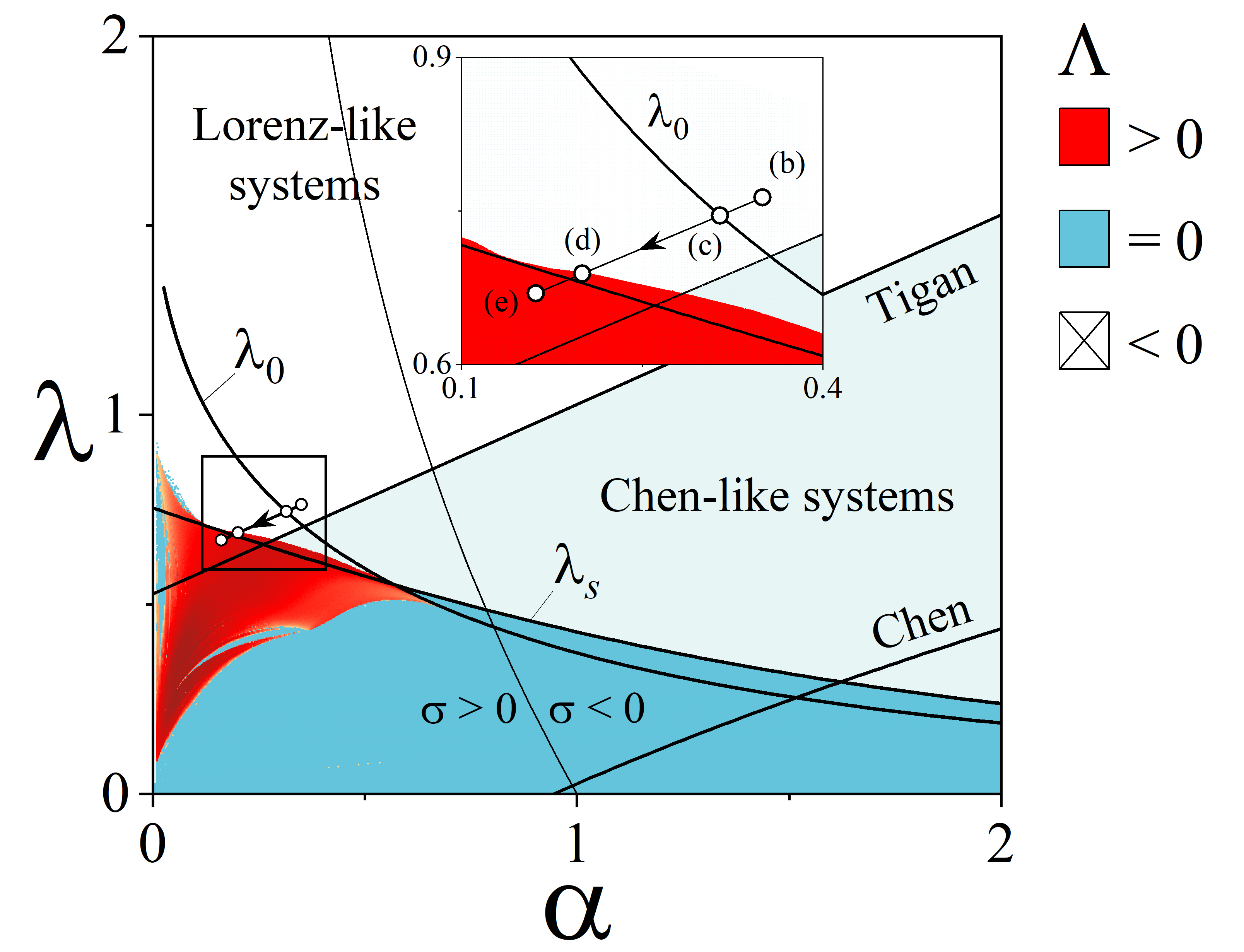}
(b)\includegraphics[width=0.43\linewidth]{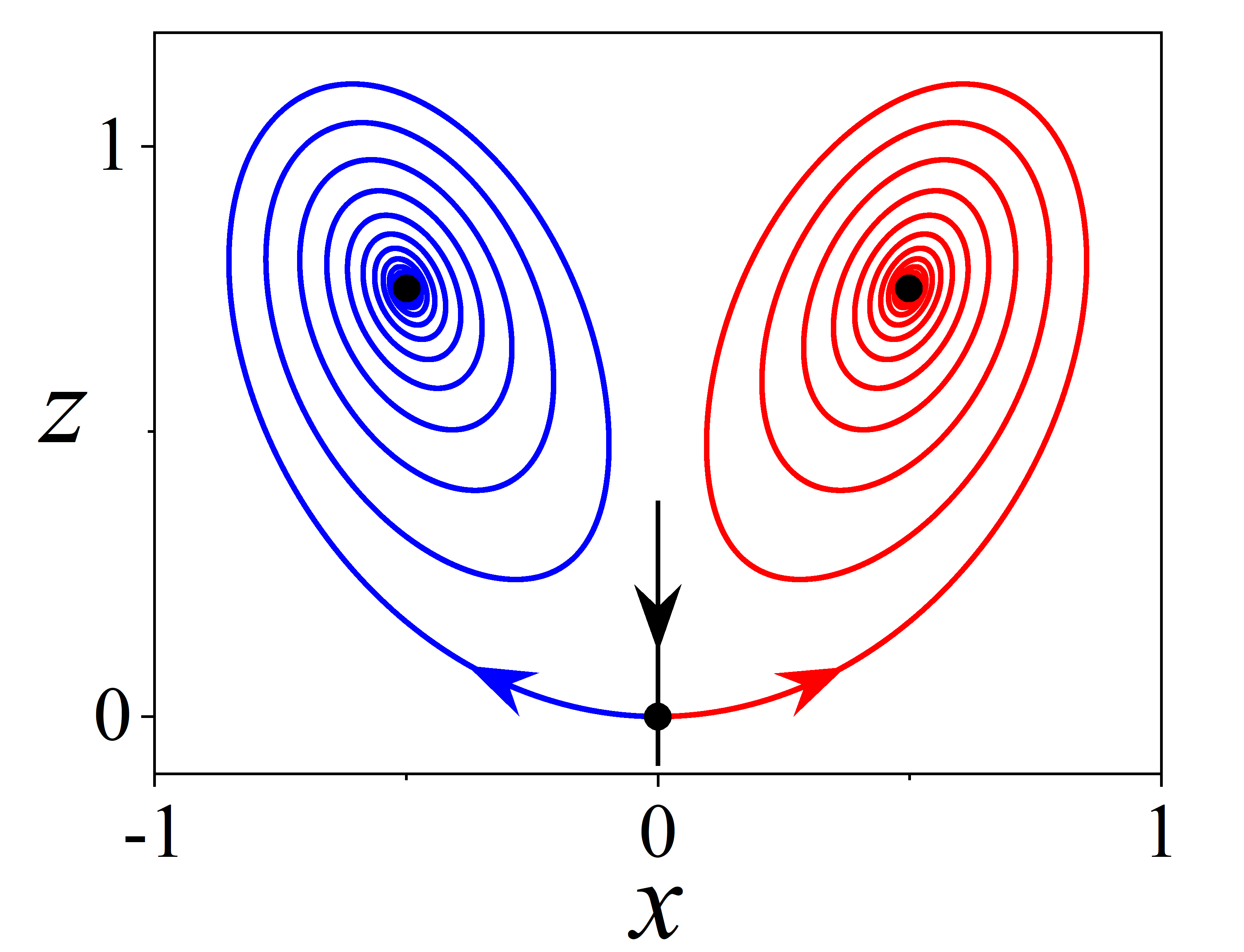}
(c)\includegraphics[width=0.43\linewidth]{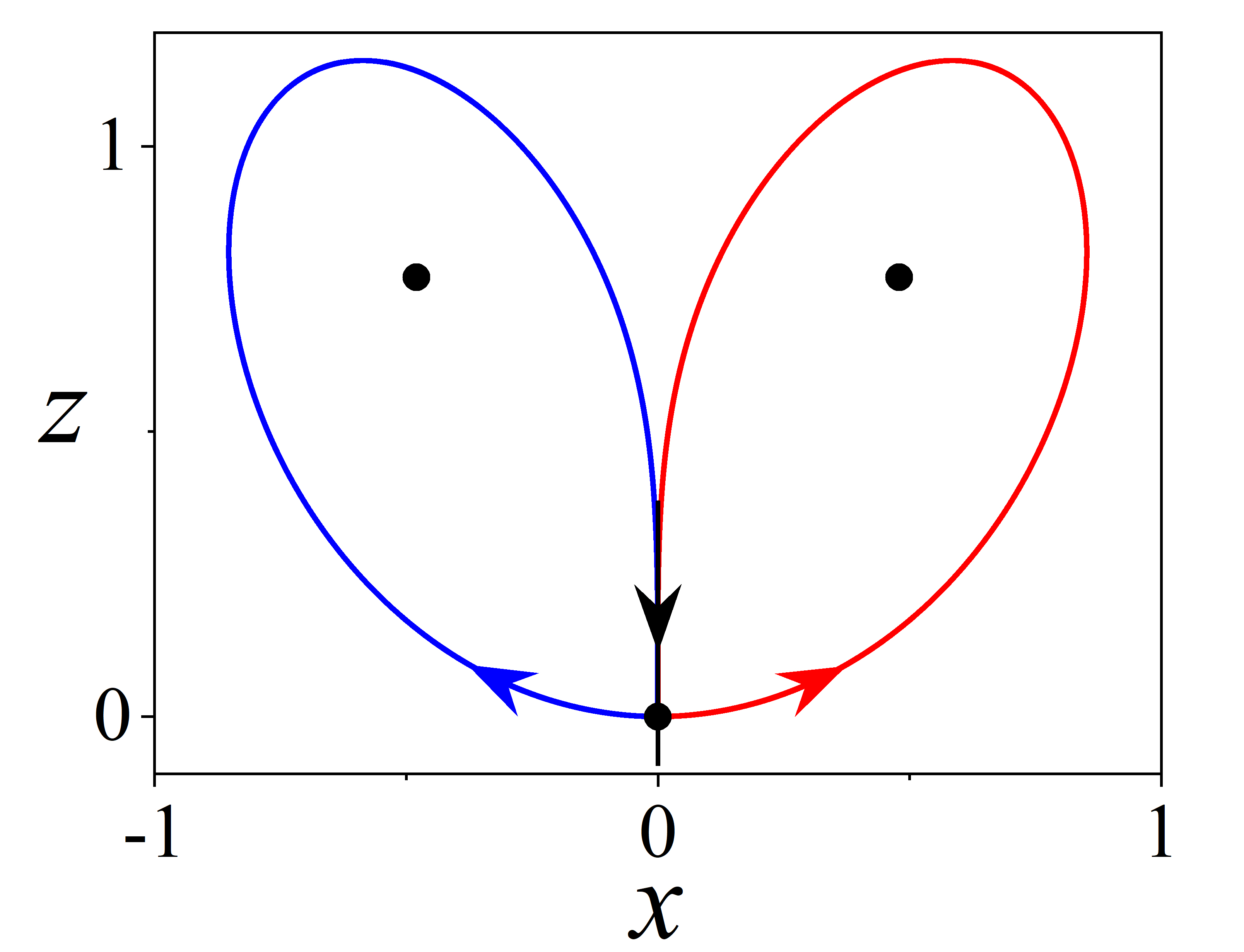}
(d)\includegraphics[width=0.43\linewidth]{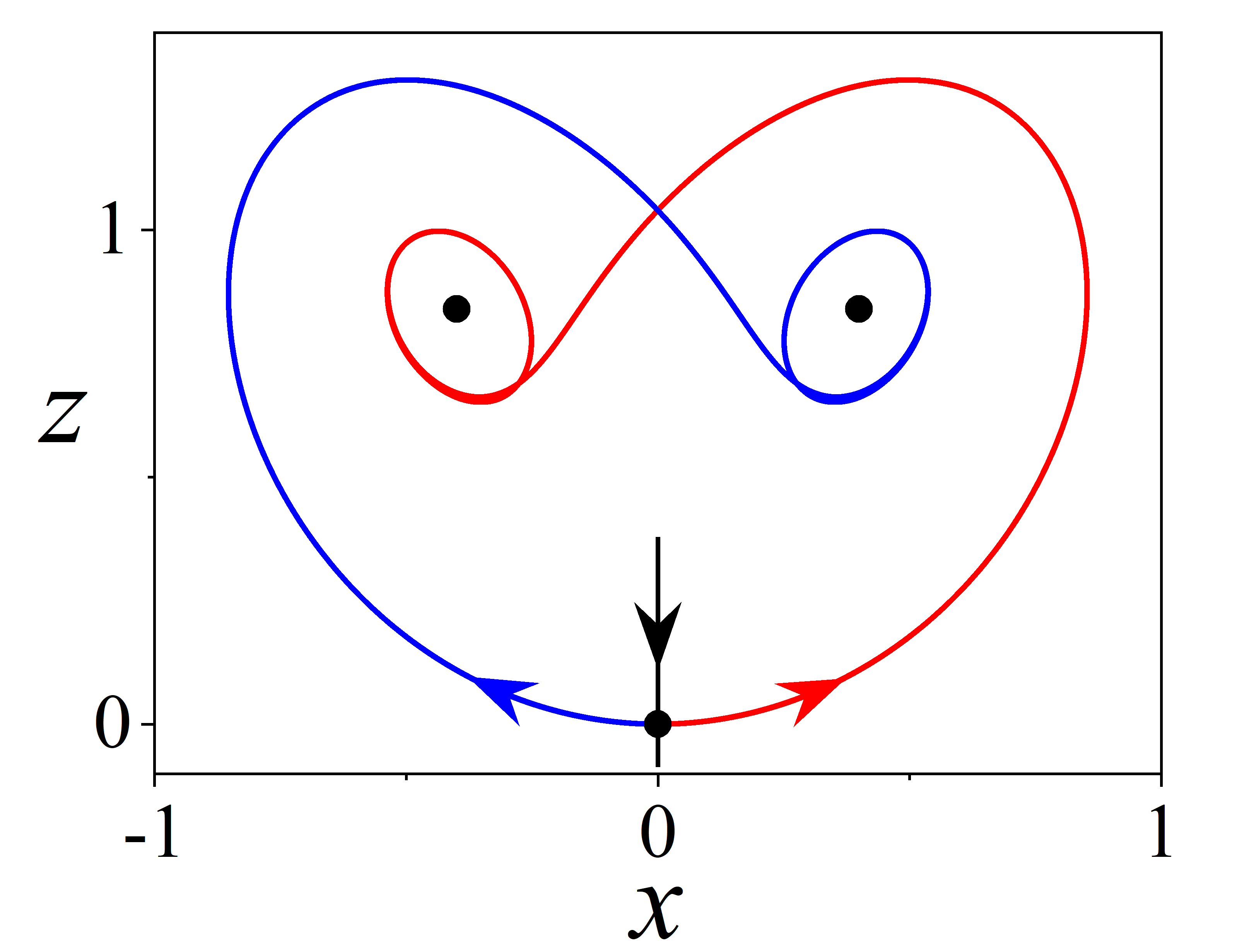}
(e)\includegraphics[width=0.43\linewidth]{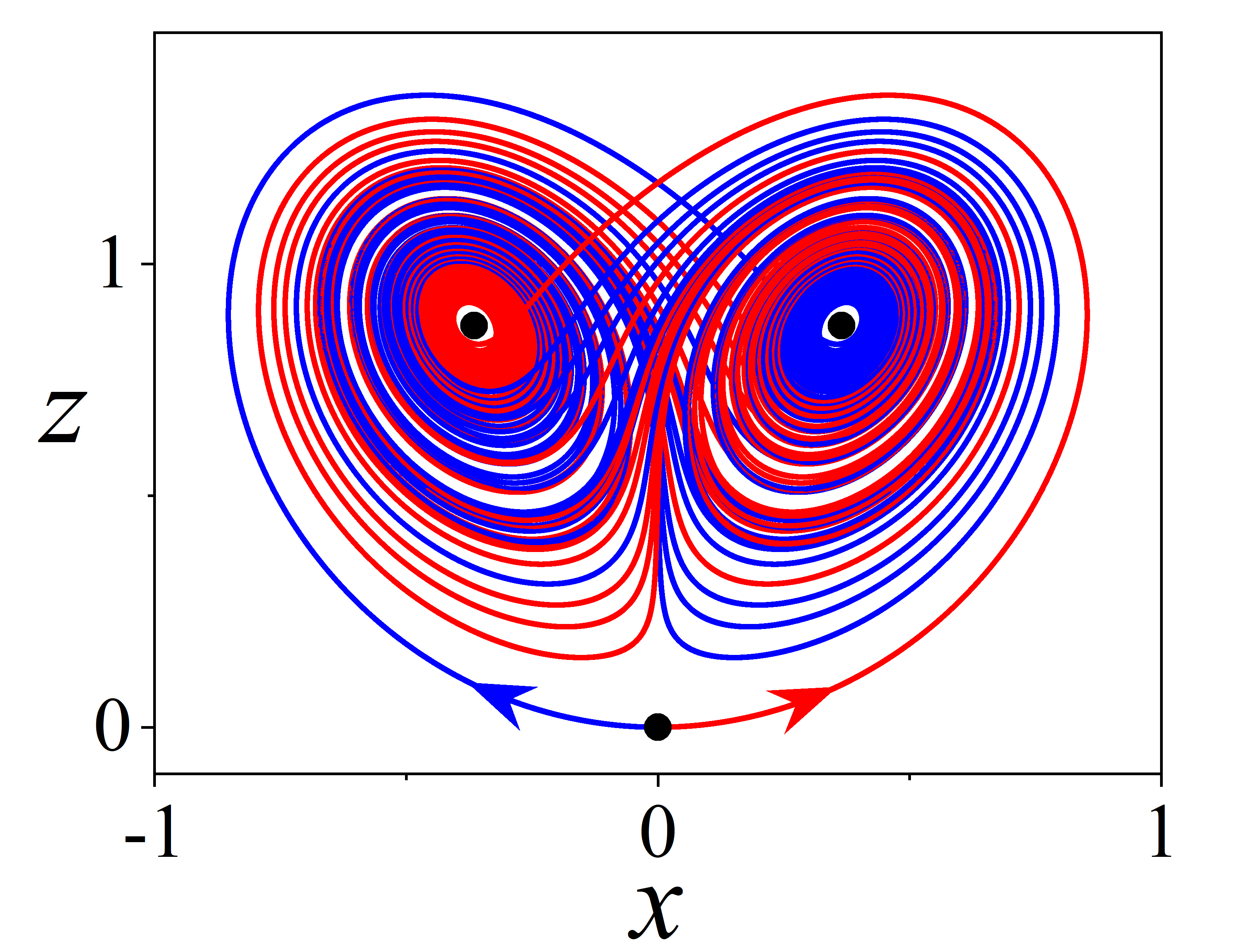}
\end{multicols}
    \caption{(a) Bifurcation diagram of UNF~\eqref{eq: main system} for $\beta=1.05487$, corresponding to original Lorenz attractor. As in Fig.~\ref{fig:UNF_systems_diag}, Tigan line  splits parameter plane into Lorenz (white) and Chen (gray-blue) regions. 
    $\Lambda$ is the largest Lyapunov exponent. Red denotes the parameter set of chaotic attractors ($\Lambda>0$), blue corresponds to existence of stable periodic orbits ($\Lambda=0$). Other regions correspond to trivial dynamics (stable foci $E^{\pm}$, $\Lambda<0$) and are not colored. The inset shows the bifurcation route (b)-(e) from trivial dynamics to chaos through principal homoclinic bifurcation $\lambda_{0}$, heteroclinic bifurcation (the boundary of white and red) and Andronov-Hopf bifurcation of foci $\lambda_{s}$ [see Eq.~\eqref{eq: two stable equilibria}].
    (b) $\alpha=0.35$, $\lambda=0.7634$: the foci are stable, the nontrivial set is absent. (c) $\alpha=0.314$, $\lambda=0.74564$: separatrices form the first homoclinic butterfly leading to the birth of nontrivial limiting set. (d) $\alpha=0.2005$, $\lambda=0.6865$: heteroclinic bifurcation leading to emergence of Lorenz strange attractor. (e) $\alpha=0.162$, $\lambda=0.6694$: Lorenz attractor is the unique attracting set.}
    \label{fig:UNF_bif_diag_Lorenz}
    \vspace*{12pt}
\end{figure*}

\begin{figure*}
 \centering
\begin{multicols}{2}
     (a)\includegraphics[width=1\linewidth]{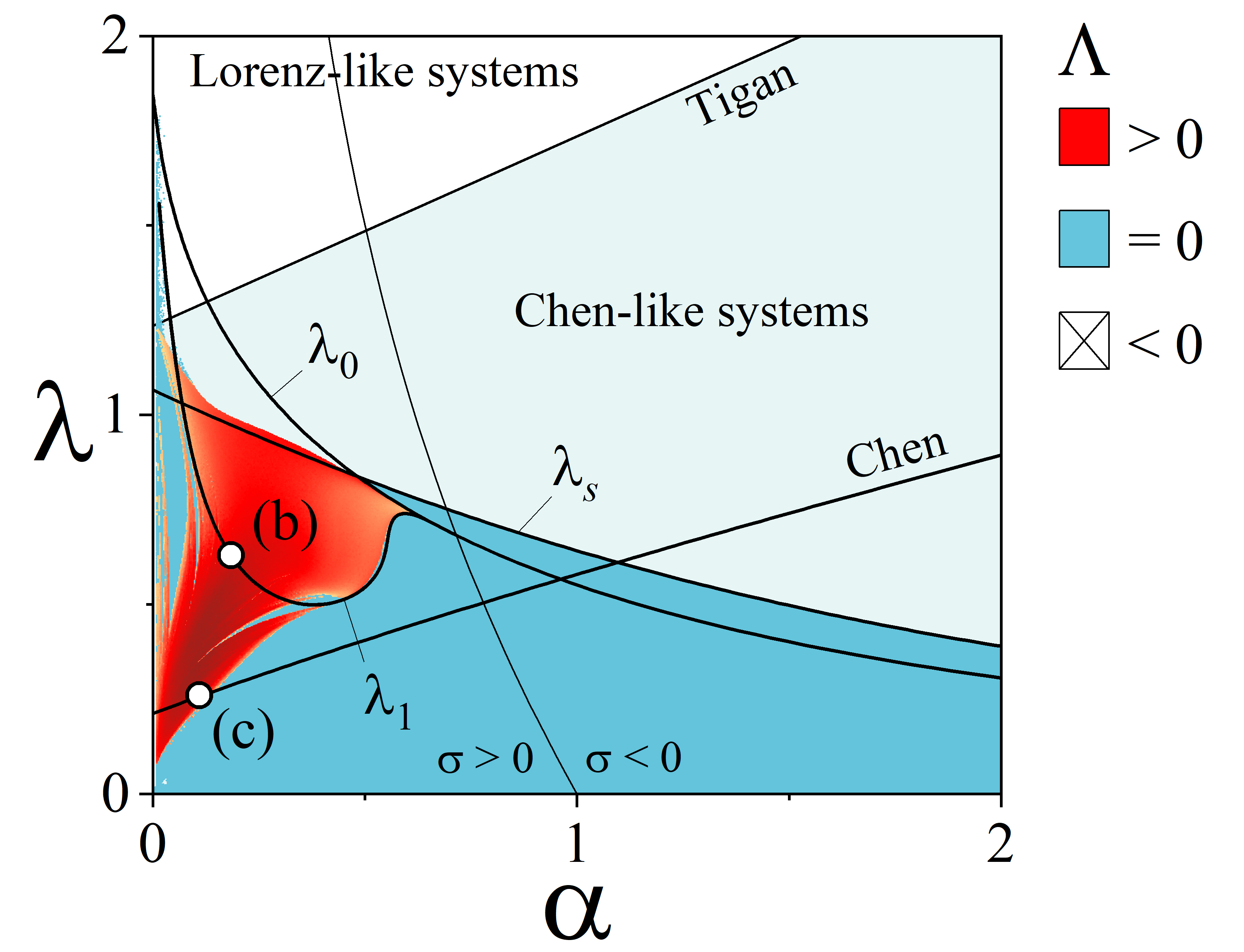}
    (b)\includegraphics[width=0.43\linewidth]{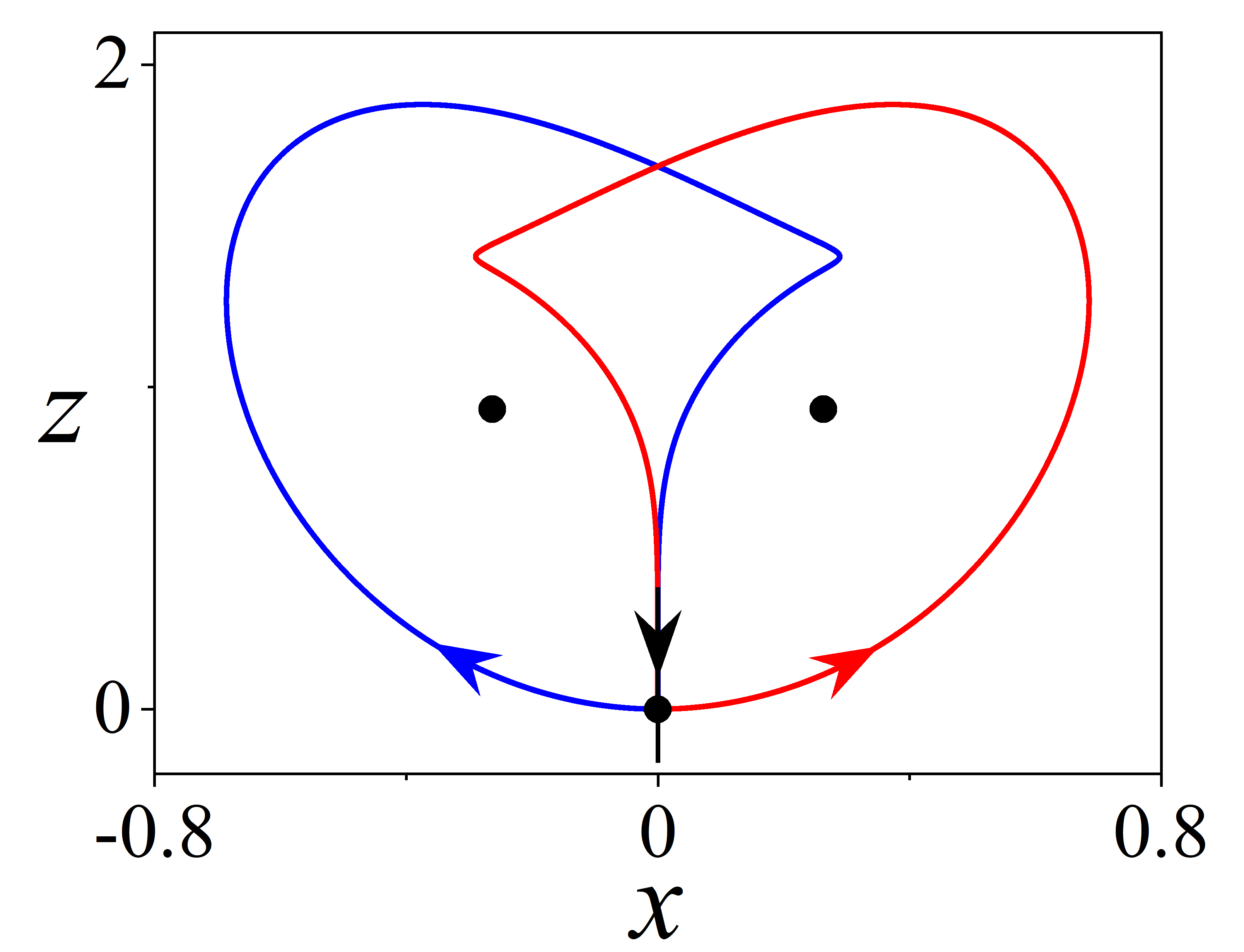}
    (c)\includegraphics[width=0.43\linewidth]{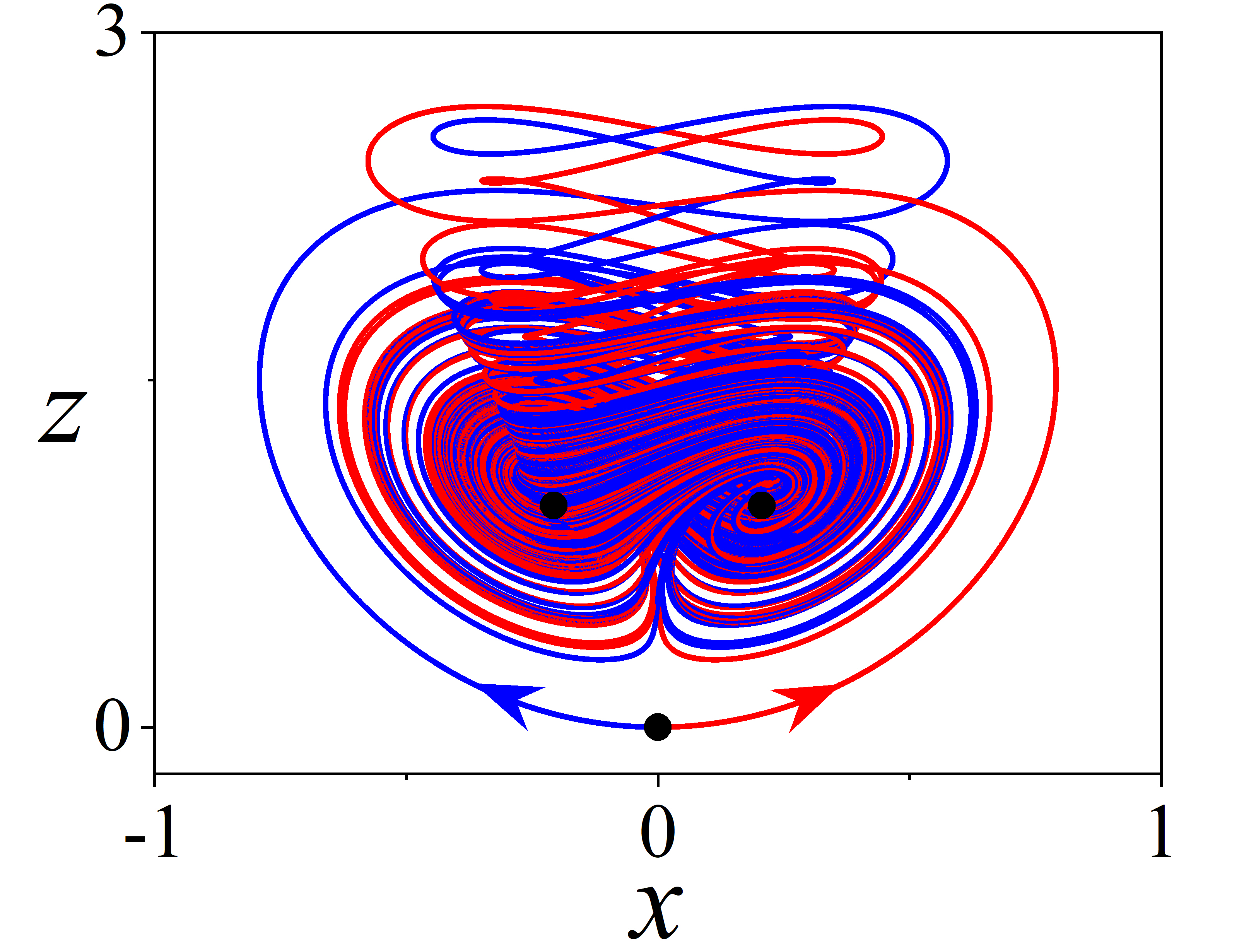}
    (d)\includegraphics[width=0.43\linewidth]{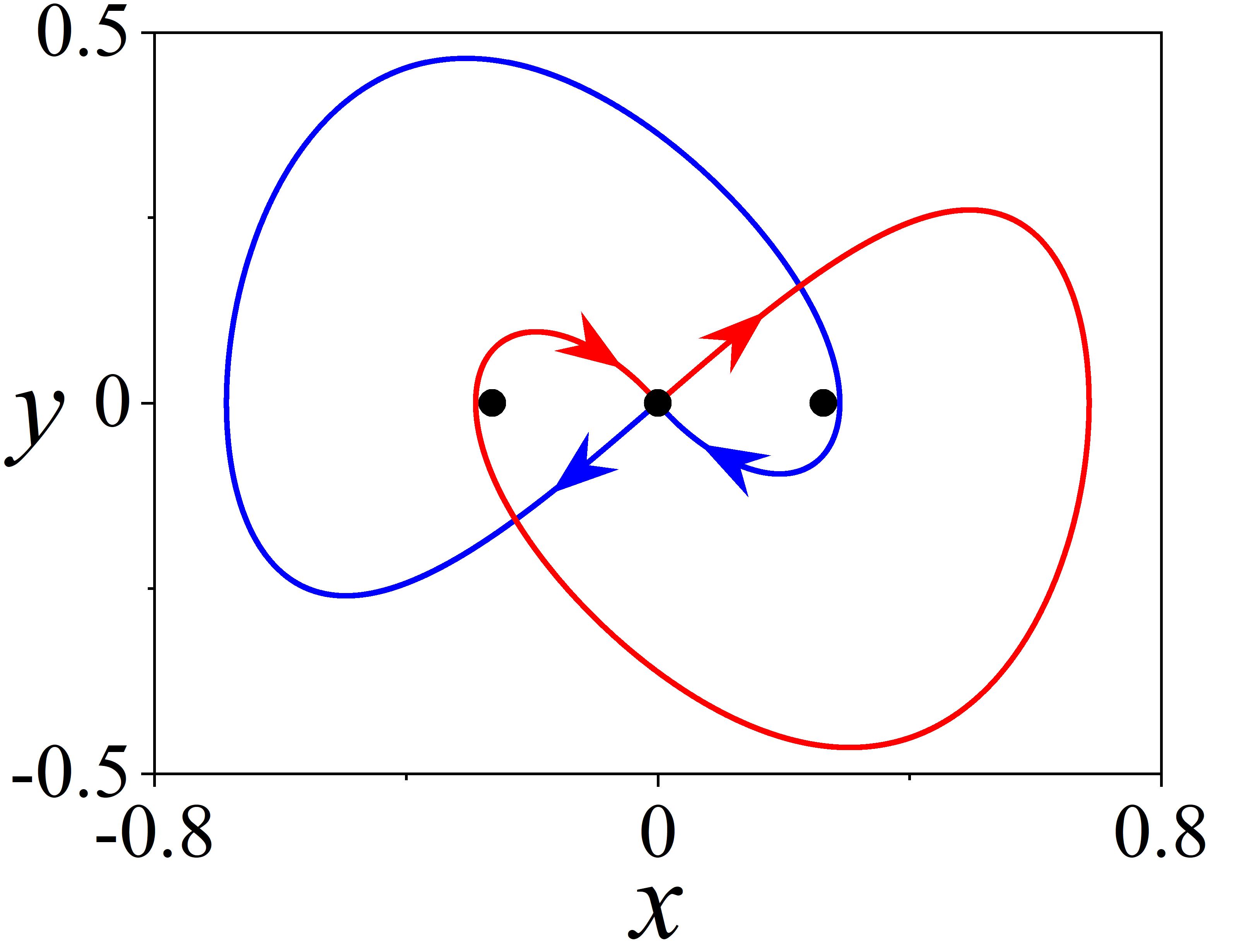}
    (e)\includegraphics[width=0.43\linewidth]{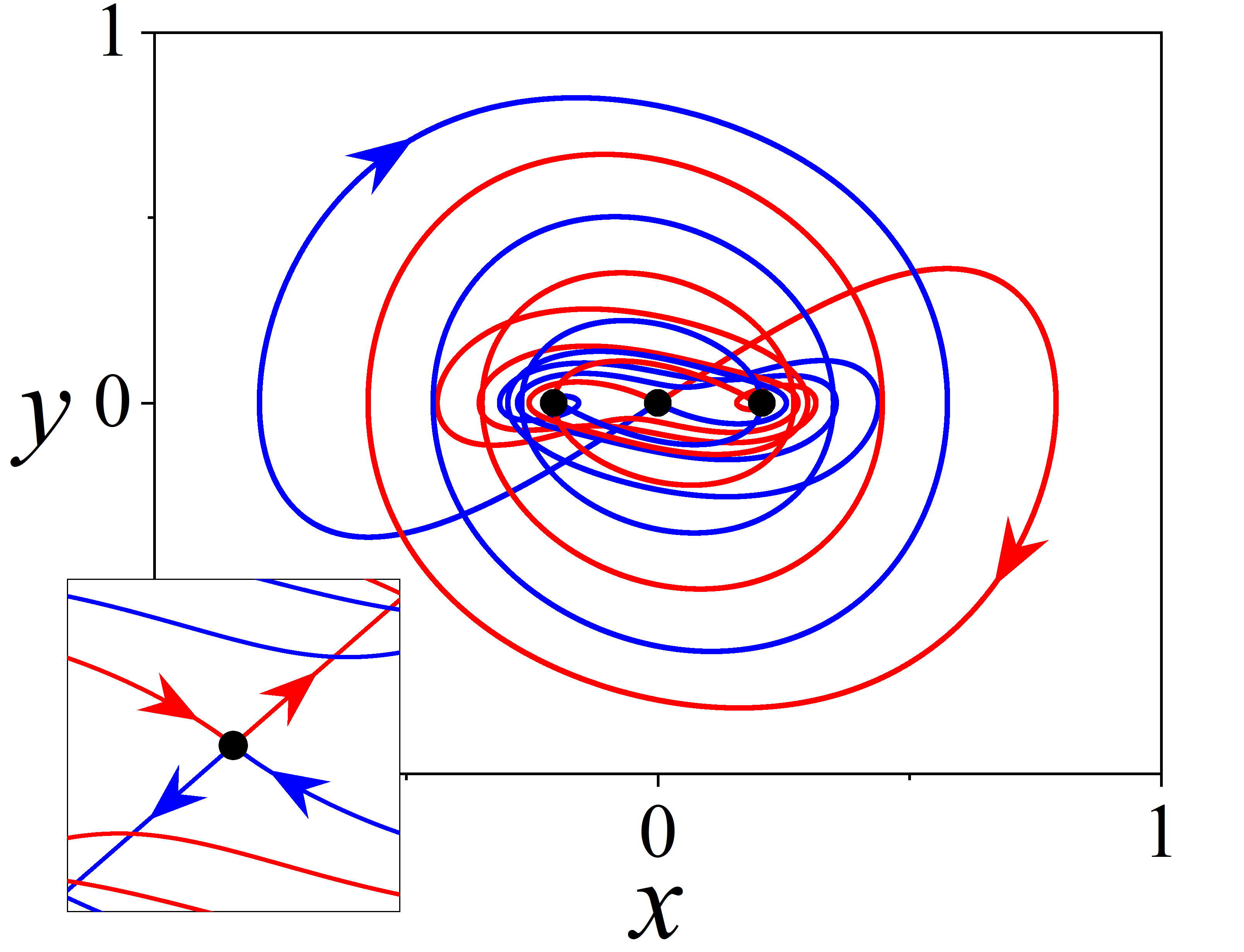}   
\end{multicols}
    \caption{(a) Bifurcation diagram of UNF~\eqref{eq: main system} for $\beta=2.47$ corresponding to original Chen attractor. All color coding and curves designations are the same as in Fig.~\ref{fig:UNF_bif_diag_Lorenz}. Curve $\lambda_{1}$ denotes the non-oriented homoclinic bifurcation. Phase pictures for circles (b) and (c) see at the panels (b), (d) and (c), (e), respectively.   
    Panels (b), (e): for $\alpha=0.184$, $\lambda=0.6296$, separatrices form the first non-oriented homoclinic butterfly. (c) $\alpha=0.11$, $\lambda=0.26$ the phase picture of original Chen attractor. (e) $xy$-projection of two symmetrical non-oriented twisted homoclinic orbits with multiple rotations around $z$-axis, existing in a small neighborhood of the parameters of the original Chen attractor. The inset contains an enlarged saddle vicinity.}
    \label{fig:UNF_bif_diag_Chen}
    \vspace*{12pt}
\end{figure*}

\section{Conclusions
\label{sec:Conclusions}}

We revealed that the system~\eqref{eq: main system} is related not only to original Lorenz system but serves the universal normal form (UNF) for dynamical systems with axial symmetry including first of all the Chen-like systems.
We improved the Theorem from \cite{belykh1984bifurcation} on existence of two symmetrical homoclinic orbits in Lorenz-like systems which indirectly generate the butterfly  structure of strange attractor. 
We proved the existence of infinite set of twisted homoclinic orbits that differ from each other in number of rotations around invariant axis of symmetry. 
Most of these bifurcations occur in the Chen zone of UNF parameters.
The original Chen attractor inherits the topology of twisted homoclinic orbit making finite number of such rotations.
For a small neighborhood of parameters $\lambda=0.26$, $\alpha=0.11$ and $\beta=2.47$ corresponding to the original Chen attractor, these homoclinic orbits are clearly observed [see Fig.~\eqref{fig:UNF_bif_diag_Chen}(e)].

Our analysis shows that decrease in parameter $\lambda$ causes an intricacy of the UNF bifurcation set.
Therefore dynamics in Chen zone of parameters is more complex than in Lorenz one.
In this regard, the ambitious paper \cite{algaba2013chen} looks ridiculous.

Between ``polar'' Lorenz and Chen systems one can meet numerous examples of regular, chaotic and quasi-strange attractors [for computer simulations for two fix values of the parameter $\beta$ see Fig.~\ref{fig:UNF_bif_diag_Lorenz}(a) and Fig.~\ref{fig:UNF_bif_diag_Chen}(a)].
Despite vast computational studies, the bifurcation structure in this broad parameter range remains far from a satisfactory analytical explanation.
All these problems are subject of a future study.

\section*{Acknowledgments}
\noindent This work was supported by the Russian Science Foundation under grant No. 24-21-00420. 

\bibliographystyle{ws-ijbc}
\bibliography{refs}

\end{multicols}
\end{document}